\newtheorem{theorem}{Theorem}
\newtheorem{lemma}{Lemma}
\newtheorem{corollary}{Corollary}
\newtheorem{conjecture}{Conjecture}
\newtheorem{question}{Question}
\newtheorem{remark}{Remark}
\newcommand{\pr}{{\mathbb{P}}}
\newcommand{\bern}{{X}}
\newcommand{\Z}{{\mathbb{Z}}}
\newcommand{\R}{{\mathbb{R}}}
\newcommand{\N}{{\mathbb{N}}}
\newcommand{\E}{{\mathbb{\, E\,}}}
\newcommand{\Var}{{\text{Var\,}}}
\def\={\stackrel {\rm def}  {=}}
\def\di={\overset{\text{${\mathcal{D} }$}} =}
\newenvironment{proofof}[1]{%
\noindent {\bf Proof of #1}}%
{\hspace*{\fill}$\blacksquare$}
\title{On Littlewood--Offord problem for arbitrary distributions (full version)}
\author{T. Ju\v skevi\v cius\thanks{This project has received funding from European Social Fund (project No 09.3.3-LMT-K-712-02-0151) under grant agreement with the Research Council of Lithuania (LMTLT).},\, V. Kurauskas \\ \\ Vilnius University}
\begin{document}

\maketitle
\begin{abstract}
    Let $X_1,\ldots,X_n$ be independent identically distributed discrete random vectors in $\mathbb{R}^d$.
    We consider upper bounds on $\sup_x \mathbb{P}(a_1X_1+\cdots+a_nX_n=x)$ under various restrictions on $X_i$ and weights $a_i$.
    When $\pr(X_i=\pm 1) = \frac 1 2$, this corresponds to the classical Littlewood--Offord problem. 
    We prove that in general for identically distributed random vectors and even values of $n$ the optimal choice for $(a_i)$ is $a_i=1$ for $i\leq \frac{n}{2}$ and $a_i=-1$ for $i > \frac {n} 2$, regardless of the distribution of $X_1$. 
    Applying these results to Bernoulli random variables answers a recent question of Fox, Kwan and Sauermann.
    
    Finally, we provide sharp bounds for concentration probabilities of sums of random vectors under the condition $\sup_{x}\mathbb{P}(X_i=x)\leq \alpha$, where it turns out that the worst case scenario is provided by distributions on an arithmetic progression that are in some sense as close to the uniform distribution as possible.

    Unlike much of the literature on the subject we use neither methods of harmonic analysis nor those from extremal combinatorics.
\end{abstract}

\section{Introduction}

Let $X_{1},\ldots, X_{n}$ be independent identically distributed (iid) discrete random vectors in $\mathbb{R}^d$. In this paper we shall be interested in bounding probabilities $\mathbb{P}(a_1X_1+\cdots+a_nX_n=x)$ under various assumptions on the weights $a_i$ and the distributions of $X_i$. 

A special case when $X_i$ are Rademacher random variables, i.e., $\mathbb{P}(X_i=\pm 1)=\frac{1}{2}$, is known as the Littlewood--Offord problem; the classical result of Erd\H{o}s \cite{ELO,Sperner} is that for non-zero real weights $a_i$ we have
\begin{equation}\label{LO}
    \mathbb{P}(a_1X_{1}+\cdots+a_nX_{n}=x)\leq \mathbb{P}(X_{1}+\cdots+X_{n}\in \{0,1\})=\frac{\binom{n}{\lfloor\frac{n}{2}\rfloor}}{2^n} = {\sqrt{ \frac 2 {\pi n}}} + O(n^{-\frac 3 2}).
\end{equation}
Kleitman \cite{KLO} proved that the latter result remains true for Rademacher random variables and $a_i\in \R^d$. That is, linear combinations with equal weights exhibit the worst case behaviour.
%
The same problem with restrictions on the arithmetical structure of the weights $a_i$ was considered in \cite{em, halasz, Stanley}. Fairly recently the Littlewood--Offord problem was considered in certain matrix groups in \cite{Tiep} and sharp results for arbitrary groups were obtained in \cite{JS}.

The first goal of the present work is to extend these problems to random variables with an arbitrary distribution in~$\R^d$. It turns out that for even values of $n$ there is a unique choice of weights $a_i$ that is optimal for arbitrary distributions. The case of odd values of $n$ is discussed later on in the paper. Let us state a result that is essential for all other results in the paper. 
\begin{lemma}\label{l1}
Let $n$ be a positive even integer and let $X_1,\ldots,X_n$ be independent discrete random vectors in $\mathbb{R}^d$. Then there is $j\in \{ 1,2,\ldots, n\}$ such that for all $x\in \mathbb{R}^d$
\begin{equation*}
    \mathbb{P}( X_{1}+\cdots+X_{n} = x)\leq \mathbb{P}( Y_{1}-Y_{2}+\cdots+Y_{n-1}-Y_{n}= 0)
\end{equation*}
where $Y_1, \dots, Y_n$ are iid copies of $X_j$.  
    The inequality is strict unless $\sum X_i-x$ and $\sum (-1)^{i+1} Y_i$ have the same distribution.
\end{lemma}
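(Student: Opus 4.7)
The plan is to apply Cauchy--Schwarz once to reduce the bound to an $\ell^2$-comparison of convolutions, and then to prove that comparison by iterating an averaged ``swap'' identity.

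Let $f_i$ be the pmf of $X_i$, write $T=X_1+\cdots+X_n$, and split $[n]$ into halves $A=\{1,\ldots,n/2\}$ and $B=\{n/2+1,\ldots,n\}$. Setting $f_A=f_1*\cdots*f_{n/2}$ and $f_B=f_{n/2+1}*\cdots*f_n$, Cauchy--Schwarz in the variable $y$ gives
\[
 \pr(T=x)^2 = \Bigl(\sum_y f_A(y)\,f_B(x-y)\Bigr)^2 \le \|f_A\|_2^2\,\|f_B\|_2^2.
\]
It thus suffices to establish the following \emph{swap bound}: for discrete pmfs $g_1,\ldots,g_m$ on $\R^d$,
\[
 \|g_1*\cdots*g_m\|_2^2 \le \max_{1\le j\le m}\|g_j^{*m}\|_2^2.
\]
Applied to $A$ and $B$ with $m=n/2$, and combined with $\|f_j^{*n/2}\|_2^2=\pr(Y_1-Y_2+\cdots+Y_{n-1}-Y_n=0)$ for iid copies $Y_i$ of $X_j$, this yields the lemma after taking square roots.

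For the swap bound I will first prove its two-variable \emph{averaged} form: for pmfs $f_1,f_2$ and any $g\colon\R^d\to\R$ with finite $\ell^2$-norm,
\[
 \|f_1^{*2}*g\|_2^2 + \|f_2^{*2}*g\|_2^2 - 2\|f_1*f_2*g\|_2^2 = \|(\tilde f_1-\tilde f_2)*g\|_2^2 \ge 0,
\]
with $\tilde f_i:=f_i*f_i^-$. Using the identity $\|F\|_2^2=(F*F^-)(0)$, the left-hand side rewrites as $((\tilde f_1-\tilde f_2)^{*2}*(g*g^-))(0)$; since $h:=\tilde f_1-\tilde f_2$ is symmetric, the factorisation $h*g*h*g^-=(h*g)*(h*g)^-$ recognises this as $\|h*g\|_2^2$. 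To deduce the $m$-variable bound, let $M$ be the maximum of $\|h_1*\cdots*h_m\|_2^2$ over all tuples $(h_1,\ldots,h_m)$ with entries in $\{g_1,\ldots,g_m\}$. If $M$ is attained at a non-constant tuple with $h_i\ne h_j$ at positions $i,j$, applying the averaged bound with $g$ the convolution of the remaining entries forces both tuples obtained by replacing $h_j$ by $h_i$ (and conversely) to also attain $M$; iterating, a constant tuple attains $M$, so $M=\max_k\|g_k^{*m}\|_2^2$, and the swap bound follows from $\|g_1*\cdots*g_m\|_2^2\le M$.

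The main technical point is verifying the averaged two-variable identity, which relies only on the symmetry of $\tilde f_1-\tilde f_2$ and the definition of $F\mapsto F^-$; the $m$-variable reduction is then a convex-combination argument showing the maximum is attained at a constant tuple. Strict inequality in the lemma follows by tracking equality in the Cauchy--Schwarz step (which forces $f_A(y)\propto f_B(x-y)$) and in each application of the averaged swap (which forces $(\tilde f_i-\tilde f_j)*g=0$); together these conditions pin down $\sum X_i-x$ and $\sum(-1)^{i+1}Y_i$ to have the same distribution.
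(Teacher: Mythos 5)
Your proof is correct, and while it opens with the same move as the paper—split the sum into halves $S$ and $T$ and apply Cauchy--Schwarz, which is equivalent to the paper's AM--GM step once one notes $\|f_A\|_2^2=\pr(S-S'=0)$—the iteration that follows is genuinely different. The paper iterates the halving step itself, replacing the whole $n$-term sequence by two iid copies of one half, and tracks progress through $\pm$-equivalence ``types''; termination needs a cycle argument and careful bookkeeping on the number of possible $(S,T)$ configurations. You instead isolate a local $\ell^2$ rearrangement bound, $\|g_1*\cdots*g_m\|_2^2\le\max_j\|g_j^{*m}\|_2^2$, and prove it via the exact sum-of-squares identity
\[
\|f_1^{*2}*g\|_2^2+\|f_2^{*2}*g\|_2^2-2\|f_1*f_2*g\|_2^2=\|(\tilde f_1-\tilde f_2)*g\|_2^2,
\]
which replaces a single pmf per step; termination is then immediate (the multiplicity of the prevailing pmf strictly increases), and the identity gives the defect exactly, so the equality case is built in rather than inferred. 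The one place to fill in detail is the strictness claim, which needs a short argument rather than a gesture: equality in Cauchy--Schwarz forces $f_A(y)=f_B(x-y)$, so $\sum X_i-x$ has pmf $\tilde f_1*\cdots*\tilde f_{n/2}$; and convolving the equality condition $(\tilde f_i-\tilde f_j)*g=0$ with $g^-$ gives $(\tilde f_i-\tilde f_j)*\tilde g=0$, which shows the product $\tilde f_1*\cdots*\tilde f_{n/2}$ is invariant under each swap and hence equals $\tilde f_{j^*}^{*(n/2)}$, the pmf of $\sum(-1)^{i+1}Y_i$. With that spelled out, the proof is complete.
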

Intuitively, this lemma says that 
the probability for a random walk with an even number of steps from some class of distributions to hit a particular value $x$ is never greater than the probability to hit the origin by repeatedly going back and forth according to some specific distribution from the class.
Its proof 
is very simple:
it merely uses multiple applications of the comparison between the arithmetic and geometric means.

Two straightforward consequences of Lemma~\ref{l1} are
\begin{corollary} For even $n$ and any $x \in \mathbb{R}^d$ we have
    $$\mathbb{P}( a_1X_{1}+\cdots+a_{n}X_{n} = x)\leq \mathbb{P}( X_{1}-X_{2}+\cdots+X_{n-1}-X_{n} = 0)$$
    \begin{enumerate}[label={(\alph*)},itemindent=1em]
        \item for iid discrete real random variables $X_i$ and any non-zero $a_i\in \mathbb{R}^d$; and \label{cor1}
        \item for iid discrete random vectors $X_i$ in $\mathbb{R}^d$ and any non-zero $a_i\in \mathbb{R}$. \label{cor2}
    \end{enumerate}
\end{corollary}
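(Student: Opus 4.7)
The plan is to reduce both parts of the corollary directly to Lemma~\ref{l1} by absorbing the weights into the random variables. I would set $Z_i := a_i X_i$ for $i = 1,\ldots,n$. In each setting these are well-defined discrete random vectors in $\mathbb{R}^d$: in (a), a scalar random variable times a vector weight; in (b), a random vector times a scalar weight. Since the $X_i$ are independent and the $a_i$ are deterministic, the $Z_i$ are independent (but in general no longer identically distributed, which is exactly why Lemma~\ref{l1} is stated for merely independent summands).

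Next I would apply Lemma~\ref{l1} to $Z_1,\ldots,Z_n$ to obtain an index $j \in \{1,\ldots,n\}$ such that
\begin{equation*}
\mathbb{P}(a_1 X_1 + \cdots + a_n X_n = x) = \mathbb{P}(Z_1 + \cdots + Z_n = x) \leq \mathbb{P}(W_1 - W_2 + \cdots + W_{n-1} - W_n = 0),
\end{equation*}
where $W_1,\ldots,W_n$ are iid copies of $Z_j = a_j X_j$. I would then write $W_i = a_j Y_i$ with $Y_1,\ldots,Y_n$ iid copies of $X_j$; because the original $X_i$ are iid, each $Y_i$ has the same distribution as $X_1$, so $Y_1-Y_2+\cdots+Y_{n-1}-Y_n$ has the same distribution as $X_1-X_2+\cdots+X_{n-1}-X_n$.

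It remains to observe that the right-hand side equals $\mathbb{P}\bigl(a_j(Y_1 - Y_2 + \cdots + Y_{n-1} - Y_n) = 0\bigr)$, and that because $a_j \neq 0$ the equation $a_j v = 0$ has only the solution $v = 0$ in both settings: in (b), $a_j$ is a nonzero real scalar acting on a vector $v \in \mathbb{R}^d$; in (a), $a_j \in \mathbb{R}^d \setminus \{0\}$ and $v \in \mathbb{R}$. Hence the event $\{a_j v = 0\}$ coincides with $\{v = 0\}$ in both cases, giving
\begin{equation*}
\mathbb{P}(W_1 - \cdots - W_n = 0) = \mathbb{P}(Y_1 - Y_2 + \cdots + Y_{n-1} - Y_n = 0) = \mathbb{P}(X_1 - X_2 + \cdots + X_{n-1} - X_n = 0).
\end{equation*}

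I do not anticipate any real obstacle in this argument: the entire proof is a substitution followed by the observation that nonzero scaling preserves the event $\{\cdot = 0\}$. The only point worth flagging is that Lemma~\ref{l1} is applied to the possibly non-identically distributed sequence $(Z_i)$, which is permitted since its hypotheses require only independence.
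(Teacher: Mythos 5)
Your argument is correct and matches the paper's own proof, which simply applies Lemma~\ref{l1} to the independent random vectors $a_iX_i$. You have just spelled out the final bookkeeping step (pulling out the nonzero $a_j$ and using that $a_jv=0$ forces $v=0$ in both settings), which the paper leaves implicit.
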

In other words, for even values of $n$ the worst case scenario in the latter two situations 
is provided by the balanced collection of $\pm 1$s,
regardless of the distribution of the random variables $X_{i}$. Therefore we shall refer to Lemma~\ref{l1} as the ``balancing lemma''. 
\begin{remark}\label{rmk.1} 
    Bounds for even values of $n$ also give bounds for odd values since by conditioning on $X_{n+1}$ (or by monotonicity of the L\'{e}vy concentration function)
    for any $n \ge 1$
    \[
        \max_{x} \pr(X_1 + \dots + X_{n+1}=x) \le \max_{x} \pr(X_1+\dots+X_{n}=x).
    \]
\end{remark}
The second part of our work has a bit different flavour. Instead of linear combinations of random vectors with given distributions we consider sums of independent random vectors $X_i$ in $\mathbb{R}^d$ such that no $X_i$ takes a particular value with too large a probability. For $\alpha\in (0, 1)$ we shall denote by $U^{\alpha}$ a random variable such that $\mathbb{P}(U^{\alpha}=l)=\alpha$ for $l=0,1,\ldots, \lfloor \frac{1}{\alpha} \rfloor-1$ and $\mathbb{P}(U^{\alpha}=\lfloor \frac{1}{\alpha} \rfloor)=1-\mathbb{P}(U^{\alpha}\in \{ 0,\ldots, \lfloor \frac{1}{\alpha} \rfloor-1\})$. For $\alpha=\frac{1}{k}$ with $k\in \N$ this random variable has the uniform distribution on $ \{ 0,\ldots,k-1\}$ and for $\alpha\in [\frac{1}{2}, 1)$ it has the Bernoulli distribution with parameter $1-\alpha$. We then establish the following inequality.
\begin{theorem}\label{thm2}
    Let $n$ be a positive even integer and let $\alpha \in (0,1)$.
    Let $X_1,\ldots, X_{n}$ be independent random vectors in $\R^{d}$ such that for all $i\in\{1,\dots,n\}$ we have
$$\sup_{x\in \R^d}\mathbb{P}(X_i=x)\leq \alpha.$$  
Then
$$\mathbb{P}(X_{1}+\cdots+X_{n} = x)\leq \mathbb{P}( U_{1}^{\alpha}-U_{2}^{\alpha}+\cdots+U_{n-1}^{\alpha}-U_{n}^{\alpha} = 0),$$
where the random variables $U_{i}^{\alpha}$ are iid copies of $U^\alpha$.
\end{theorem}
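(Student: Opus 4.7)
The plan is to combine Lemma~\ref{l1} with a separate extremality argument identifying $U^\alpha$ as the worst admissible distribution. Since the constraint $\sup_y\mathbb{P}(X_i=y)\le\alpha$ passes to any single $X_j$, Lemma~\ref{l1} applied to $X_1,\dots,X_n$ yields some $j$ such that
\[
\mathbb{P}(X_1+\cdots+X_n=x)\le\mathbb{P}(Y_1-Y_2+\cdots+Y_{n-1}-Y_n=0),
\]
where $Y_1,\dots,Y_n$ are iid copies of $Y:=X_j$. It thus suffices to prove
\[
\mathbb{P}(Y_1-Y_2+\cdots-Y_n=0)\le\mathbb{P}(U_1^\alpha-U_2^\alpha+\cdots-U_n^\alpha=0)
\]
for any random vector $Y$ with $\sup_y\mathbb{P}(Y=y)\le\alpha$. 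Composing with a linear functional $\phi\colon\R^d\to\R$ that is injective on the countable support of $Y$ preserves the point-mass bound and only enlarges the event $\{\sum(-1)^{i+1}Y_i=0\}$, so one may further assume $Y$ takes values in $\R$.

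I would next rewrite both sides as collision probabilities. With $m=n/2$ and $T=Y_1+Y_3+\cdots+Y_{n-1}$, we have $\mathbb{P}(Y_1-Y_2+\cdots-Y_n=0)=\mathbb{P}(T=T')=\sum_t P_Y^{*m}(t)^2$, and analogously for $U^\alpha$. The target inequality becomes the $\ell^2$ comparison $\|P_Y^{*m}\|_2^2\le\|P_{U^\alpha}^{*m}\|_2^2$ between the $m$-fold convolutions. The case $m=1$ is elementary: subject to $\sum p_y=1$ and $p_y\le\alpha$, the mass profile $(\alpha,\ldots,\alpha,\beta)$ with $k=\lfloor1/\alpha\rfloor$ copies of $\alpha$ and $\beta=1-k\alpha$ is the majorizing vertex of the constraint set, so the Schur-convex functional $\sum_y p_y^2$ is maximized there.

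The main obstacle is extending this comparison to general $m$. My first attempt would be a rearrangement argument transforming $Y$ into $U^\alpha$ through a sequence of elementary moves: redistributing mass between pairs of atoms of weight strictly less than $\alpha$ so as to push the mass profile toward $(\alpha,\ldots,\alpha,\beta)$, and translating atoms to collapse the support onto an arithmetic progression of length $k+1$. The technical difficulty is that, in contrast with the case $m=1$, the functional $\|P_Y^{*m}\|_2^2$ depends nontrivially on both the mass profile and the positions of the atoms, so one must verify that each elementary move weakly increases it. A cleaner route, in keeping with the paper's stated avoidance of harmonic analysis and extremal combinatorics, would be to reapply Lemma~\ref{l1} in a suitably enlarged product space---for instance by augmenting each $Y_i$ with an auxiliary coordinate that records which of the $k+1$ ``slots'' of $U^\alpha$ it occupies---so that the balancing lemma in the enlarged space directly witnesses the extremality of $U^\alpha$ without a separate combinatorial computation.
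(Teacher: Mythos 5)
Your setup is sound but the proof stops exactly where the real work begins, and you say so yourself. Let me make the gap precise. Applying Lemma~\ref{l1} first (to reduce to iid copies of some $X_j$) is legitimate, and the linear-functional trick is a reasonable replacement for the reduction from $\R^d$ to $\R$ that the paper instead gets from Ushakov's Lemma~\ref{reduction}. The $m=1$ Schur-convexity observation is also correct. But the theorem reduces to the claim
\[
\sum_t P_Y^{*m}(t)^2 \;\le\; \sum_t P_{U^\alpha}^{*m}(t)^2
\]
for every law $Y$ with atoms bounded by $\alpha$, and that claim is not proved: you explicitly flag it as ``the main obstacle,'' offer two sketches, and carry out neither. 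This is not a cosmetic omission --- it is the entire content of the theorem. The quantity $\|P_Y^{*m}\|_2^2$ depends on the positions of the atoms, not just the multiset of masses, so the Schur-convex argument that handles $m=1$ has no obvious lift to $m\geq 2$; one has to simultaneously (i) rearrange the support onto a centered arithmetic progression and (ii) push the mass profile to the extreme point, and then show the result dominates. In the paper this is done with three distinct tools you do not supply: a Krein--Milman / extreme-point reduction (Lemma~\ref{extreme}) to distributions of the form $\mu_{\alpha,A,y}$; Gabriel's rearrangement inequality for convolutions of sequences (Lemma~\ref{sym}), used twice, once to replace each symmetrized factor $Z_i=Y_{2i-1}-Y_{2i}$ by its symmetric decreasing rearrangement and once to establish the needed stochastic domination; and a discrete Birnbaum peakedness inequality (Lemma~\ref{bb}, Corollary~\ref{cor3}) to propagate that domination through the convolution.

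Your second suggested route --- re-running the balancing lemma in ``an enlarged product space'' with an auxiliary slot coordinate --- does not appear to be workable as stated. The balancing lemma compares $\pr(\sum X_i = x)$ with the back-and-forth probability for iid copies of one of the $X_i$, and the weights are all $\pm 1$; augmenting the state space does not by itself change which distribution is selected, and nothing in the lemma mechanism forces the selected distribution toward $U^\alpha$ or compresses the atoms onto consecutive integers. You would still need a rearrangement/peakedness argument, i.e.\ you would be re-deriving Lemma~\ref{sym} and Lemma~\ref{bb} in disguise. Also note a smaller issue: after the linear-functional reduction the $Y_i$ are real-valued, but the rearrangement and peakedness machinery in the paper is formulated for integer-valued variables (symmetric decreasing rearrangement of a sequence indexed by $\Z$). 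You would need a further step, such as the one Ushakov's lemma provides, to land on $\Z$ before a rearrangement argument can even be stated cleanly.
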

Note that the latter inequality is optimal as the random variables $U_{i}^{\alpha}$ satisfy the condition of the theorem.
The result for $\alpha=\frac{1}{k}$ with $k \in \N$ was established by Rogozin~\cite{rog} and also follows from the results of Leader and Radcliffe~\cite{LR}. Bounds for arbitrary $\alpha$ were obtained by Ushakov~\cite{Ushakov}, but they were not optimal when $\alpha < \frac 1 2$. We postpone the detailed discussion regarding a more complete history of this problem to Section 3.

\begin{corollary}\label{constant}
In the setting of Theorem~\ref{thm2} for all $n$ (even or odd) we get
\begin{align}
    &\mathbb{P}(X_{1}+\cdots+X_{n} = x) \le \left(2 \pi n \Var(U^\alpha)\right)^{-\frac 1 2} (1+o(1)),
    \quad \mbox {\emph{where}} \\
    & \Var(U^\alpha) = \frac 1 {12} \lfloor \alpha^{-1} \rfloor  (\lfloor \alpha^{-1} \rfloor  + 1) \alpha
    (2 + 4 \lfloor \alpha^{-1} \rfloor - 3 \alpha \lfloor \alpha^{-1} \rfloor - 3 \alpha \lfloor \alpha^{-1} \rfloor^2)  
    . \nonumber
\end{align}
\end{corollary}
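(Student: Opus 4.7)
The plan is to reduce the corollary to the symmetric iid sum supplied by Theorem~\ref{thm2}, then apply a classical local central limit theorem (LCLT), and finally compute $\Var(U^\alpha)$ by a direct elementary calculation.

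First, for even $n$ Theorem~\ref{thm2} already gives
\[
\pr(X_1+\cdots+X_n = x) \le \pr(U_1^\alpha - U_2^\alpha + \cdots + U_{n-1}^\alpha - U_n^\alpha = 0),
\]
and for odd $n$ Remark~\ref{rmk.1} reduces matters to the even case of $n-1$, with the transition from $n-1$ to $n$ absorbed into the $(1+o(1))$ factor. To apply the LCLT, pair the terms by setting $V_i := U_{2i-1}^\alpha - U_{2i}^\alpha$ for $i=1,\ldots,n/2$; these are iid, symmetric, integer-valued random variables with $\Var(V_i) = 2\Var(U^\alpha) > 0$. Since for every $\alpha \in (0,1)$ we have $\pr(V_i = 0)>0$ and $\pr(V_i = \pm 1)>0$, the lattice span of $V_i$ equals $1$. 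Gnedenko's LCLT then yields
\[
\pr\!\left(\sum_{i=1}^{n/2} V_i = 0\right) = \frac{1}{\sqrt{2\pi (n/2)\cdot 2\Var(U^\alpha)}} + o(n^{-1/2}) = \frac{1+o(1)}{\sqrt{2\pi n\,\Var(U^\alpha)}},
\]
which is the required bound.

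It remains to compute $\Var(U^\alpha)$ explicitly. Writing $k := \lfloor \alpha^{-1}\rfloor$ and $\beta := 1-k\alpha \in [0,\alpha)$, direct summation gives
\[
\E U^\alpha = \alpha\,\tfrac{k(k-1)}{2} + k\beta, \qquad \E (U^\alpha)^2 = \alpha\,\tfrac{(k-1)k(2k-1)}{6} + k^2\beta,
\]
from which $\Var(U^\alpha)=\E(U^\alpha)^2-(\E U^\alpha)^2$ reduces, after routine algebra, to the stated closed form. Two sanity checks: $\alpha = 1/k$ yields $(k^2-1)/12$, the variance of the uniform distribution on $\{0,\ldots,k-1\}$, and $\alpha \in [1/2,1)$ yields $\alpha(1-\alpha)$, the Bernoulli variance with parameter $1-\alpha$.

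The entire conceptual content is already in Theorem~\ref{thm2}; what remains here is (i) checking the LCLT hypotheses (non-degeneracy and lattice span $1$, both elementary), and (ii) the slightly tedious but routine algebraic simplification of $\Var(U^\alpha)$. If one wanted a statement uniform in $\alpha$ as $\alpha$ varies with $n$, a quantitative (Berry--Esseen type) LCLT would be required, but for fixed $\alpha$ the classical Gnedenko theorem suffices and is the main technical input.
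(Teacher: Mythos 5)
Your proposal is correct and follows essentially the same route as the paper's proof: pair the summands into $V_i = U^\alpha_{2i-1}-U^\alpha_{2i}$, apply a local central limit theorem to the iid symmetric integer-valued $V_i$ (the paper cites Davis--McDonald, you cite Gnedenko; either works once you note the span is $1$, equivalently that the support of $V_1$ is an interval), and handle odd $n$ via Remark~\ref{rmk.1}. The only cosmetic difference is in computing $\Var(U^\alpha)$: you sum moments directly, while the paper writes $U^\alpha$ as a mixture of $U^{1/k}$ and $U^{1/(k+1)}$; both routes yield the stated closed form.
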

When $\alpha^{-1}$ is integer, this simplifies to $\Var(U^\alpha) = \frac {1-\alpha^2} {12 \alpha^2}$.

Recently Fox, Kwan and Sauermann \cite{FKS} have posed the following question (we rephrase it slightly).
\begin{question}\label{qfox}
Let $a_1, \dots, a_n$ be non-zero real numbers and let $X_1, \dots, X_n$ be independent Bernoulli random variables with parameter $0<p\leq \frac{1}{2}$. What upper bounds (in terms of $n$ and $p$) can we give on the maximum point probability
$$\max_{x\in \R}\pr(a_1X_1+\cdots+a_nX_n=x)?$$  
\end{question}
Taking Bernoulli random variables in either Corollary~1\ref{cor1} or 1\ref{cor2} we obtain the optimal bound for the probability in question for even values of $n$ under more general conditions. Alternatively, it is a special case of Theorem~\ref{thm2} applied with $\alpha=1-p$. 
The situation for odd $n$ seems to be much more involved.
We were only able to prove that optimal $a_i$ must be $\pm 1$ when $n$ is large enough (see Section 4) and get some partial results illustrating why this case is more difficult.

For the Bernoulli case, or $\alpha \ge \frac 1 2$ in Theorem~\ref{thm2}, 
Ushakov's paper \cite{Ushakov}, communicated by Prokhorov in the early 80s,
already contains the asymptotically sharp bound 
$$\pr(X_1+\cdots+X_n=x) \leq (2 \pi np(1-p))^{-\frac 1 2}(1+O(n^{-\frac 1 2})).$$ 
\begin{remark} A solution to Question~\ref{qfox} has been very recently and independently obtained by Singhal \cite{Singhal} using different methods with stronger results than ours in the case when $n$ is odd.
\end{remark}
The paper is organized as follows. Lemma~\ref{l1} and Corollary~1 are proved in Section 2. Theorem~\ref{thm2} is proved and the history of the problem is discussed in Section 3. Section 4 is devoted
to the Bernoulli case and results for odd values of $n$.
Finally, we present and discuss some open problems in Section 5.

\section{Proof of the balancing lemma}

We shall use the notation $X\sim Y$ to denote the fact that the random vectors $X$ and $Y$ have the same distribution. We shall now proceed with an elementary proof of the balancing lemma, which
also works for random summands taking values in a countable subset of an Abelian group.

\medskip

\begin{proofof}{Lemma~\ref{l1}}
    It is enough to prove the lemma for $x=0$, otherwise we can redefine $X_1$ as $X_1-x$.

    Let us split the sum into two halves: 
    \[
        S = \sum_{i=1}^{\frac n 2} X_i \quad \mbox{and} \quad T=\sum_{i={\frac n 2} + 1}^{n} X_i.
    \]
    Let $S'$ and $T'$ be independent copies of $S$ and $T$ respectively.
    By the inequality of arithmetic and geometric means
    \begin{align}
        \pr(S + T = 0) &= \sum_x \pr(S=x) \pr(-T=x) \le \sum_x \frac {\pr(S=x)^2 + \pr(-T=x)^2} 2 \nonumber
        \\ &=\frac 1 2 \sum_x \pr(S=x)^2 + \frac 1 2 \sum_x \pr(-T=x)^2 \nonumber
        \\ &\leq \max \left\{\sum_x \pr(S=x)^2, \sum_x \pr(T=x)^2\right\}\nonumber 
        \\ &= \max \left\{ \pr(S - S' = 0), \pr(T - T' = 0) \right\}. \label{eq.agm}
    \end{align}
    Note that for non-negative $p$ and $q$, $p q \leq \frac {p^2 + q^2} 2$ and
    we have an equality if and only if $p=q$. Therefore (\ref{eq.agm}) is equality if and only if $T \sim -S$.

    We will say that random vectors $X$ and $Y$ have the same \emph{type} if either $Y$ or $-Y$ has the same distribution as $X$.
    Consider the different equivalence classes (types) of $\{X_1, \dots, X_{n}\}$ defined by the above equivalence relation. Note that $S-S'$ is a sum of $n$ independent random vectors whose terms preserve the types of $S$,
    furthermore, each term in $S$ is matched by a term with an opposite sign in $S'$ and similarly for $T-T'$.
    Thus if all the random variables have the same type, the proof follows by (\ref{eq.agm}).

    If there are more than two types, let $\mathcal{X}_1 = \{X_{i_1}, \dots, X_{i_k}\}$ and $\mathcal{X}_2 = \{X_{j_1}, \dots, X_{j_l}\}$ be different classes other than the largest equivalence class (break ties arbitrarily). Clearly $k \leq \frac n 2$ and $l \leq \frac n 2$. Rearrange the variables so that all the variables in $\mathcal{X}_1$ are in $S$ and all the variables in $\mathcal{X}_2$ are in $T$. Applying (\ref{eq.agm}) yields a new sequence
    of random variables $X_1', \dots, X_{n}'$ with $X_{2k}' \sim -X_{2k-1}'$, $k \in \{1, \dots, \frac n 2\}$ which has at least one less type and
    \[
        \pr(X_1 + \dots + X_{n} = 0) \leq \pr(X_1' + \dots + X_{n}'=0).
    \]
    By repeating this argument at most $n$ times, we reduce the number of types to one or two.
    It remains to consider the case when there are exactly two types among $X_1, \dots, X_{n}$. Repeatedly apply (\ref{eq.agm}) by rearranging the sequence so that the first half $S$ contains only the variables of the largest type. Stop when either a single type remains or 
    the first cycle $(S_1, T_1)$, $(S_2, T_2)$, $\dots$, $(S_k, T_k)$ is formed, i.e. the two halves $(S_k, T_k)$ after some step have the same distribution as the two halves $(S_1, T_1)$ in a previous step.
    This procedure is well defined because the number of possible configurations for $S$ and $T$ is finite. Using (\ref{eq.agm}) and additionally ordering the variables by their ``sign'' we can even ensure that the total number of steps is at most $n-1$.  
    Suppose we still have two types in the end. Then
    \[
        \pr(X_1 + \dots + X_{n} = 0) \leq \dots \leq \pr(S_1 +T_1=0) \le \dots \leq \pr(S_k + T_k =0),
\]
    which implies $\pr(S_1 + T_1 = 0) = \dots = \pr(S_k + T_k = 0)$, and so $T_1 \sim -S_1$, see our observation on the equality in (\ref{eq.agm}).

    Taking $j$ such that $X_j$ has the prevailing type completes
    the proof of the stated inequality.
    By the above observation, if we applied (\ref{eq.agm}) at least once where $T \sim -S$ does not hold, this inequality is strict. Otherwise we must have $\sum X_i \sim \sum (-1)^{i+1} Y_i$.
\end{proofof}

\bigskip

\begin{proofof}{Corollary~1}
   Apply Lemma \ref{l1} to the independent random vectors $a_iX_i$.
\end{proofof}

\bigskip

Part of the early inspiration for Lemma~\ref{l1} came from a simple observation of a \emph{math.stackexchange} user André Nicolas about simple symmetric random walks \cite{anicolas}.

\section{Random variables with bounded concentration}

Let $X_1,\ldots, X_n$ be independent random vectors in $\R^d$ and denote their sum by $S_n$. Assume that for all $i$ we have
$$\sup_{x\in \R^d}\pr(X_i=x)\leq \alpha \in (0,1).$$
The bounds on the concentration probability $\pr(S_n=x)$ were studied by many authors. Let us just mention the work of Esseen \cite{ess}, Rogozin \cite{rog} and Gamkrelidze \cite{g}. It was proved by Rogozin that when $d=1$ and $\alpha=\frac{1}{k}$ for $k\in \N$, the probability $\pr(S_n=x)$ is maximized when all $X_i$ are iid uniform random variables in the set $\{0,\ldots, k-1\}$. This result also follows from more general bounds obtained by Leader and Radcliffe \cite{LR}. To our knowledge the sharpest known bounds for $\alpha \in [\frac{1}{2},1)$ and all $d$ were obtained by Ushakov \cite{Ushakov}.
Such $\alpha$ are especially interesting as they cover all Bernoulli distributions. Ushakov established the inequality
\begin{align*}
    \pr(S_n=x) &\leq (2\pi (n+1) \alpha(1-\alpha))^{-\frac{1}{2}}\left(1+(2 (n+1) \alpha(1-\alpha))^{-\frac{1}{2}}\right),
\end{align*}
which is asymptotically sharp: this can be seen by Lemma~9 in \cite{JKarxiv} (with the correct second order term), or alternatively by using the Local Limit Theorem.

Let us give a short description of the proof of Theorem \ref{thm2}. Firstly, we characterize the extremal points of the convex set of distributions with a bound on their maximal probability. We then make use of a result of Ushakov \cite{Ushakov} to reduce the problem from high dimensions to integer-valued random variables. Having narrowed down the class of distributions, we use the balancing lemma. The latter step produces a sum of symmetric distributions and we then proceed by using an old rearrangement inequality for convolutions of sequences proved by Gabriel \cite{Gabriel} which we have been fortunate to find in the classical monograph of Hardy, Littlewood and P\'olya \cite{HLP}. After the latter operation the random variables under consideration become symmetric and unimodal. The final touch is to use a discrete analogue of Birnbaum's result from \cite{Birnbaum} on peakedness of symmetric unimodal random variables which intuitively compresses the mass of the underlying distributions to the center as much as it is possible. 

Having outlined the strategy, we shall step by step introduce the relevant notions and results until we can then combine them and finish the proof in a few lines.

For any probability measure $\mu$ on a finite set $X\subset \R^n$ define its concentration
to be the quantity
\[
    Q(\mu) = \max_{x \in X} \mu\{x\}.
\]
Notice that $Q$ is a convex functional. Also note that the set of measures $S_\alpha=\{\mu | Q(\mu) \le \alpha\}$
is convex. Given $\alpha \in (0,1)$, a set $A \subseteq X$ with $|A| = \lfloor \alpha^{-1} \rfloor$ 
and $y \in X \setminus A$, let us denote by $\mu_{\alpha, A,y}$ the probability measure in $S_\alpha$ such that
\[
    \mu_{\alpha, A, y}\{x\} = \begin{cases} 
                 \alpha, &\mbox{for } x \in A, \\
                 1 - \lfloor \alpha^{-1} \rfloor \alpha, &\mbox{for } x = y, \\
                 0, &\mbox{otherwise.}
    \end{cases}
\]
(When $\alpha^{-1}$ is integer it is equal to $|A|$, in this case $y$ becomes a dummy parameter: $\mu_{\alpha, A, y}$ is the uniform measure on $A$ for any $y \in X \setminus A$.)
We shall say that a convex combination $p\mu + (1-p) \nu$ of two distinct measures $\mu$ and $\nu$ on $X$ is \emph{non-trivial} if $0<p<1$.

\begin{lemma}\label{extreme} Let $\alpha \in (0,1)$ and let $\mu$ be a measure in $S_\alpha$. Then $\mu$ can be written as a non-trivial convex combination
    of two distinct measures in $S_\alpha$ if and only if it is not a measure $\mu_{\alpha, A, y}$ for some $A \subseteq X$ and $y \in X\setminus A$.
\end{lemma}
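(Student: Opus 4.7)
The plan is to treat $S_\alpha$ as a convex polytope sitting in the probability simplex on $X$ and to prove both implications by a direct atom-by-atom analysis. Fix an enumeration $X = \{x_1,\dots,x_N\}$ and identify a measure $\mu \in S_\alpha$ with the vector $(\mu\{x_1\},\dots,\mu\{x_N\})$; then $S_\alpha$ is cut out of the hyperplane $\sum_i \mu\{x_i\} = 1$ by the $2N$ inequalities $0 \le \mu\{x_i\} \le \alpha$.

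For the easy direction, I would verify that each $\mu_{\alpha, A, y}$ is an extreme point. Suppose $\mu_{\alpha, A, y} = p\nu_1 + (1-p)\nu_2$ with $0 < p < 1$ and $\nu_1, \nu_2 \in S_\alpha$. For each $x \in A$ the constraint $\nu_j\{x\} \le \alpha$ combined with $p\nu_1\{x\} + (1-p)\nu_2\{x\} = \alpha$ forces $\nu_1\{x\} = \nu_2\{x\} = \alpha$; for each $x \notin A \cup \{y\}$ the nonnegativity constraint combined with $p\nu_1\{x\} + (1-p)\nu_2\{x\} = 0$ forces $\nu_1\{x\} = \nu_2\{x\} = 0$; normalization then forces $\nu_1\{y\} = \nu_2\{y\} = 1 - \lfloor\alpha^{-1}\rfloor \alpha$, contradicting $\nu_1 \ne \nu_2$.

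For the converse, I would argue that any $\mu \in S_\alpha$ not of the form $\mu_{\alpha, A, y}$ admits a nontrivial convex decomposition. Let $A = \{x : \mu\{x\} = \alpha\}$ and $B = \{x : 0 < \mu\{x\} < \alpha\}$, with $|A| = k$ and $k \alpha + \sum_{x \in B} \mu\{x\} = 1$. Since $\mu$ is not of the prescribed form, either (i) $|B| \ge 2$, in which case pick $x_1, x_2 \in B$, set $\nu_{\pm}\{x_1\} = \mu\{x_1\} \pm \varepsilon$, $\nu_{\pm}\{x_2\} = \mu\{x_2\} \mp \varepsilon$, leave the other atoms of $\mu$ untouched, and take $\varepsilon$ small enough that $\nu_\pm$ stay in $S_\alpha$; or (ii) $|B| \le 1$ but $k \ne \lfloor \alpha^{-1}\rfloor$ or the single atom in $B$ has the wrong mass. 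In subcase (ii) one checks directly: if $|B| = 0$ then $k\alpha = 1$, so $\alpha^{-1} \in \mathbb N$ and $\mu = \mu_{\alpha, A, y}$ (contradiction), while if $|B| = 1$ with $B = \{y\}$ then $\mu\{y\} = 1 - k\alpha \in (0,\alpha)$, which forces $k = \lfloor \alpha^{-1}\rfloor$, again giving $\mu = \mu_{\alpha, A, y}$. So in fact only subcase (i) remains, and the $\varepsilon$-perturbation produces two distinct measures in $S_\alpha$ with $\mu = \tfrac12 \nu_+ + \tfrac12 \nu_-$.

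The main (minor) obstacle is just cleanly handling the two regimes $\alpha^{-1} \in \mathbb N$ versus $\alpha^{-1} \notin \mathbb N$ inside a single argument, since in the integer case $y$ is a dummy parameter and the "$|B|=1$" subcase does not occur; once the atoms with mass $\alpha$ and the atoms with intermediate mass are enumerated carefully, the decomposition argument becomes a one-line perturbation.
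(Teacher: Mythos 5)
Your proof is correct, and while the overall shape is the same (show the $\mu_{\alpha,A,y}$ are extreme by the usual convexity squeeze, show everything else is decomposable by an $\varepsilon$-perturbation), your converse direction takes a genuinely different route from the paper. The paper sets $\mu_2 = \mu_{\alpha,A,y}$ (with $A$ the $\lfloor\alpha^{-1}\rfloor$ largest atoms and $y$ the next largest) and moves $\mu$ in the direction $\pm(\mu-\mu_2)$, i.e.\ $\mu_1 = (1+\varepsilon)\mu - \varepsilon\mu_2$, which requires a slightly delicate verification that $\mu_1$ stays below $\alpha$ on atoms outside $A$ (using $\mu\{y\}\le(|A|+1)^{-1}$ and $1+\varepsilon\le\alpha(\lfloor\alpha^{-1}\rfloor+1)$). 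You instead observe directly that a non-extreme $\mu$ must have at least two atoms in the open interval $(0,\alpha)$ and shift mass $\pm\varepsilon$ between them — the standard ``two slack constraints give a feasible line'' argument for vertices of a polytope. Your version is somewhat more elementary (no constructed reference measure, no bound involving $\alpha(\lfloor\alpha^{-1}\rfloor+1)$) and handles the integer/non-integer regimes in one breath via the case split on $|B|$, whereas the paper's construction has the minor elegance of exhibiting the decomposition with one endpoint itself an extreme point. Your forward direction is also stated a bit more cleanly than the paper's (which invokes a slightly opaque claim about a particular $x\in A$), but it is the same underlying idea. No gaps.
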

\begin{proof}
    First let us show that if $\mu=\mu_{\alpha, A, y}$, then it cannot be decomposed. Assume the contrapositive:
    that $\mu = p \mu_1 + (1-p) \mu_2$ for distinct measures $\mu_1, \mu_2 \in S_\alpha$ and $0 < p < 1$.
    It follows that both measures have support on $A \cup \{y\}$ if $\alpha^{-1}$ is not integer and on $A$ otherwise.
    In the latter case all measures are equal, a contradiction. In the former case there is $x \in A$ such that $\mu\{x\} = \mu_1\{x\}= \alpha$ and $\mu_2\{x\} = 1 - \lfloor \alpha^{-1} \rfloor \alpha < \alpha$. So $p \mu_1\{x\} + (1-p) \mu_2\{x\} < \alpha=\mu\{x\}$, also a contradiction.

    Now assume that $\mu$ is not of the form $\mu_{\alpha, A, y}$. 
Since $\mu \in S_\alpha$, its support is of size at least $\lfloor \alpha^{-1} \rfloor + 1$.
    Let  $A$ be the set of $\lfloor \alpha^{-1} \rfloor$ largest atoms of $\mu$ and let $y$ be its largest atom outside $A$.
    Thus $\mu\{x\}>0$ for each $x \in A \cup \{y\}$.
    Let $\mu_2=\mu_{\alpha, A, y}$.
    Fix a positive $\epsilon$ small enough that  
    $(1+\epsilon) \mu\{x\} - \epsilon \mu_2\{x\} \ge 0$
    for $x \in A \cup \{y\}$ and 
    $1+ \epsilon \le \alpha (\lfloor \alpha^{-1} \rfloor + 1)$.
    Define $\mu_1 = (1+\epsilon) \mu - \epsilon \mu_2$. We have $\mu = p \mu_1 + (1-p)\mu_2$ with $p=\frac 1 {1+ \epsilon}$.
    Since $\mu_1 = \mu + \epsilon(\mu - \mu_2)$ and $\mu \ne \mu_2$, 
    $\mu_1$ and $\mu_2$ must be distinct.
    Let us now check that $\mu_1 \in S_\alpha$. 
    For $x \in A$ we have $\mu_1\{x\} = \mu\{x\} + \epsilon(\mu\{x\} - \alpha) \le \mu\{x\} \le \alpha$.
    By the choice of $A$ and $y$, for each $x \in X \setminus A$, $\mu\{x\} \le \mu\{y\} \le (|A| + 1)^{-1}$. Thus $\mu_1\{x\} \le (1+\epsilon) \mu\{x\} \le (1+\epsilon) (|A| + 1)^{-1} \le \alpha$ for $x \in X \setminus A$.
   \end{proof}

\medskip

In 
his work on 
the problem of this section Ushakov \cite{Ushakov} proved a couple of reduction lemmas that allow switching from distributions in Hilbert spaces to distributions on the integers. We shall state the one we require here.

\begin{lemma}\label{reduction}
Let $\mu_{1}, \ldots, \mu_{n}$ be probability distributions in some Hilbert space such that
$$Q(\mu_{i})\leq \alpha.$$
Then there exist probability distributions $\nu_{1}\ldots, \nu_{n}$ on $\Z$ such that $Q(\nu_{i})\leq \alpha$ and
$$Q(\mu_{1} \ast\ldots \ast\mu_{n})\leq Q(\nu_{1}\ast\ldots \ast\nu_{n}),$$ 
where $\ast$ stands for convolution.
\end{lemma}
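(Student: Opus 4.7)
The plan is to transport the problem from the Hilbert space to $\Z$ by pushing each $\mu_i$ forward along a group homomorphism. Let $x^{\ast} \in H$ be a point where $Q(\mu_1 \ast \cdots \ast \mu_n)$ is attained, and let $G \subseteq H$ be the subgroup generated by $\{x^{\ast}\} \cup \bigcup_i \operatorname{supp}(\mu_i)$. I look for a homomorphism $f \colon G \to \Z$ that is injective on each $\operatorname{supp}(\mu_i)$. Setting $\nu_i := f_{\ast} \mu_i$, injectivity yields $Q(\nu_i) = Q(\mu_i) \le \alpha$, and by linearity every tuple $(y_1, \dots, y_n)$ with $y_j \in \operatorname{supp}(\mu_j)$ and $\sum_j y_j = x^{\ast}$ maps to a tuple with $\sum_j f(y_j) = f(x^{\ast})$, so
\[
(\nu_1 \ast \cdots \ast \nu_n)(\{f(x^{\ast})\}) \ge (\mu_1 \ast \cdots \ast \mu_n)(\{x^{\ast}\}) = Q(\mu_1 \ast \cdots \ast \mu_n),
\]
which is exactly the inequality in the lemma.

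A preliminary reduction lets me assume each $\mu_i$ is purely atomic. Splitting $\mu_i = \mu_i^{\mathrm a} + \mu_i^{\mathrm c}$ into atomic and non-atomic parts, every term in the expanded convolution containing at least one $\mu_j^{\mathrm c}$ is itself non-atomic and contributes nothing to $Q$; hence $Q(\mu_1 \ast \cdots \ast \mu_n) = Q(\mu_1^{\mathrm a} \ast \cdots \ast \mu_n^{\mathrm a})$, and the missing mass $1 - \mu_i^{\mathrm a}(H)$ can be redistributed over fresh points each of mass $\le \alpha$ without affecting the argument.

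When every $\operatorname{supp}(\mu_i)$ is finite the homomorphism $f$ is easy to produce. The group $G$ is then finitely generated and torsion-free (as a subgroup of the ambient real vector space), so $G \cong \Z^k$ for some $k$. The injectivity requirement amounts to finitely many conditions $f(y-y') \ne 0$ for distinct pairs $y, y'$ in the supports; tensoring with $\Q$, these cut out finitely many proper $\Q$-subspaces of the dual space $\operatorname{Hom}(G \otimes \Q, \Q)$. Since a vector space over the infinite field $\Q$ is not a finite union of proper subspaces, a $\Q$-linear functional $\phi$ avoiding all of them exists, and clearing denominators in the finitely many values of $\phi$ on a basis of $G$ produces the desired $f \colon G \to \Z$.

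The main obstacle is the case of infinite supports, because a countable torsion-free abelian group need not embed into $\Z$ (for instance $\Q$ admits only the zero homomorphism). I would handle this by exhaustion: take finite $A_i^{(m)} \uparrow \operatorname{supp}(\mu_i)$, apply the finite-support construction to obtain $\nu_i^{(m)}$ on $\Z$ with $Q(\nu_i^{(m)}) \le \alpha$, and translate $f_m$ so that $f_m(x^{\ast}) = 0$. Then
\[
\bigl(\nu_1^{(m)} \ast \cdots \ast \nu_n^{(m)}\bigr)(\{0\}) \ge \sum_{(y_j) \,:\, y_j \in A_j^{(m)},\, \sum_j y_j = x^{\ast}} \prod_j \mu_j(\{y_j\}) \longrightarrow Q(\mu_1 \ast \cdots \ast \mu_n)
\]
as $m \to \infty$. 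The delicate point is to extract weak limits $\nu_i$ on $\Z$ without losing the convolution mass at $0$; this requires tightness of $\{\nu_i^{(m)}\}_m$, which I would secure by further constraining the $f_m$'s so that the images of a fixed generating set remain bounded, preventing mass from escaping to infinity.
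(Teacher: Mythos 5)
The paper does not prove this lemma; it imports it from Ushakov \cite{Ushakov}. So there is no in-paper argument to compare against, and I assess your proposal on its own. Your reduction to purely atomic measures is sound, and your finite-support argument --- $G$ is free abelian of finite rank, injectivity on the supports amounts to finitely many proper-subspace avoidance conditions in the rational dual, and one clears denominators --- is correct and clean.

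The infinite-support case, however, contains a genuine gap, and the proposed patch (``constraining the $f_m$'s so that the images of a fixed generating set remain bounded'') cannot work: $G$ has no fixed finite generating set, and a basis of $G_m$ need not extend to a basis of $G_{m+1}$. More concretely, take $\alpha=\tfrac 1 2$ and $\mu_1$ placing mass $2^{-k}$ at $\tfrac 1 k \in \R$ for $k \ge 1$. The group generated by $\{1, \tfrac 1 2, \ldots, \tfrac 1 m\}$ is $L_m^{-1}\Z$ with $L_m = \operatorname{lcm}(1,\ldots,m)$, and any homomorphism $f_m$ that separates the two heaviest atoms $1$ and $\tfrac 1 2$ must send $\tfrac 1 {L_m}$ to a non-zero integer, forcing $|f_m(1)-f_m(\tfrac 1 2)| = |f_m(\tfrac 1 2)| \ge L_m/2 \to \infty$. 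Since any interval carrying $\nu_1^{(m)}$-mass at least $\tfrac 7 8$ must contain both of those image points (they carry masses $\tfrac 1 2$ and $\tfrac 1 4$), no sequence of translates of $\nu_1^{(m)}$ is tight, for any choice of $f_m$. In the limit the obstruction is structural: the subgroup generated by $\{1, \tfrac 1 2, \tfrac 1 3, \ldots\}$ admits only the zero homomorphism into $\Z$. What your exhaustion does prove is the weaker statement that for every $\epsilon>0$ there exist $\nu_i$ on $\Z$ with $Q(\nu_i)\le\alpha$ and $Q(\nu_1\ast\cdots\ast\nu_n) > Q(\mu_1\ast\cdots\ast\mu_n)-\epsilon$. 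That happens to be all the paper actually needs from Lemma~\ref{reduction} (one can let $\epsilon\to 0$ at the end of the proof of Theorem~\ref{thm2}), but it is not the lemma as stated, and closing the remaining gap requires an idea beyond the compactness sketch you give.
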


In two important parts of the proof we shall use rearrangement results from \cite{Gabriel} (see also \cite{HLP} page 273, Theorem 374).  First let us define certain special rearrangements of a finite sequence of non-negative numbers $(a)=(a_{-k},\ldots,a_k)$ indexed by integers. The rearrangement $(^{+}a)$ is defined by inequalities $^{+}a_{0}\geq {^{+}}a_{-1}\geq {^{+}}a_{1}\geq {^{+}}a_{-2}\geq\ldots \geq {^{+}}a_{k}$. Analogously, the rearrangement $(a^{+})$ is defined by inequalities  $a_{0}^{+}\geq a_{1}^{+}\geq a_{-1}^{+}\geq a_{2}^{+}\geq\ldots \geq a_{-k}^{+}$. Finally, if in the sequence $(a)$ all values except the largest one appear an even number of times, we define the symmetric decreasing rearrangement $(a^{\ast})$ by the inequalities  $a^{\ast}_{0}\geq a^{\ast}_{1}= a^{\ast}_{-1}\geq a^{\ast}_{2}=a^{\ast}_{-2}\geq \ldots \geq a^{\ast}_{-k}= a^{\ast}_{k}$. When $a_i = \pr(X=i)$ for a random variable $X$, we will write for brevity $\pr(X=i)^+ = a_i^+$, etc.
\begin{lemma}\label{sym}
Let $(a), (b), (c), (d),\ldots$ be a finite collection of finite sequences of non-negative numbers such that all collections except maybe $(a)$ and $(b)$ have a symmetric decreasing rearrangement. Then
$$\sum_{r+s+t+u+\cdots=0}a_{r}b_{s}c_{t}d_{u}\ldots\leq \sum_{r+s+t+u+\cdots=0} {^{+}a_{r} b^{+}_{s}c^{\ast}_{t}d^{\ast}_{u}}\ldots.$$
\end{lemma}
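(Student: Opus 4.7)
The plan is to follow the classical Gabriel rearrangement scheme of \cite{Gabriel,HLP}: reduce to indicator functions of finite sets by layer-cake, dispatch the two-sequence case by Hardy and Littlewood's discrete rearrangement inequality, and induct on the number of symmetric sequences via a discrete Riesz--Sobolev-type step. The layer-cake is standard: write $a_r = \int_0^\infty \mathbf{1}[a_r > \tau]\,d\tau$, similarly for $b, c, d, \ldots$, and use multilinearity and Fubini to reduce to indicator sequences. A sequence admits a well-defined symmetric decreasing rearrangement if and only if all of its super-level sets have odd cardinality; their SDRs are then the centred intervals $[-k,k]$. It thus suffices to show that for arbitrary finite $A, B \subset \Z$ and odd-sized finite $C, D, \ldots \subset \Z$, the number of tuples in $A \times B \times C \times D \times \cdots$ with coordinates summing to $0$ is maximized when $A, B$ are replaced by the supports of ${^{+}}\mathbf{1}_A$ and $\mathbf{1}_B^{+}$, and $C, D, \ldots$ by centred intervals of the same sizes.

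For the two-sequence base case, the sum reduces to $\sum_r a_r b_{-r}$, an inner product of $a$ with the reflection of $b$. The rearrangements $({^{+}}a)$ and $(b^{+})$ are tuned so that ${^{+}}a$ and the reflection of $b^{+}$ order the positions $0, -1, 1, -2, 2, \ldots$ by decreasing value in exactly the same way: the $k$-th largest element of $a$ meets the $k$-th largest element of $b$ at a common index. Hardy and Littlewood's classical rearrangement inequality for sequences then gives the claim.

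For the inductive step, assume the lemma for at most $k$ symmetric sequences and introduce one more symmetric $(e)$. Writing $G(x) = \sum_{r+s+t+u+\cdots=x} {^{+}}a_r \cdot b_s^{+} \cdot c_t^{\ast} \cdot d_u^{\ast} \cdots$, the full sum equals $\sum_x G(x) e_{-x}$. The key subroutine is a discrete Riesz--Sobolev-type inequality: for arbitrary sequences $f, g$ and a symmetric sequence $h$, $\sum_{r+s+t=0} f_r g_s h_t \leq \sum_{r+s+t=0} {^{+}}f_r \cdot g_s^{+} \cdot h_t^{\ast}$. Iterating this subroutine with $h$ cycling through the symmetric sequences replaces each symmetric summand by its SDR, after which the two-sequence base case finishes the argument. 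The main obstacle is exactly this one-step Riesz--Sobolev-type inequality in the discrete setting, which I would approach by Steiner-type symmetrization on indicator sets: swap pairs of elements of $\operatorname{supp} f$ and $\operatorname{supp} g$ around the centre according to the order dictated by the $^{+}/{+}$ schemes, and verify that each swap does not decrease the number of zero-sum triples, using the unimodality of $h^{\ast}$ to show that indices closer to $0$ are weighted at least as heavily as farther ones. This combinatorial core is essentially Gabriel's theorem; once established, the outline above closes cleanly.
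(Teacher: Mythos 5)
The paper does not prove Lemma~\ref{sym} at all: it is imported verbatim from Gabriel's 1931 paper and Hardy--Littlewood--P\'olya, Theorem~374, and the authors explicitly cite these sources rather than reproving the result. So there is no in-paper proof to compare your proposal against; you are attempting a proof of a black-box lemma.

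As an attempted proof, yours has a genuine gap at exactly the point where the difficulty lies. The layer-cake reduction is fine (and your observation that SDR-admissibility corresponds to all nonempty super-level sets having odd cardinality is correct), and the two-sequence base case via the Hardy--Littlewood rearrangement inequality is standard. But the three-term inequality $\sum_{r+s+t=0} f_r g_s h_t \le \sum_{r+s+t=0} {}^{+}f_r\, g_s^{+}\, h_t^{*}$ that you call ``the key subroutine'' is not a small technical step --- it is the full content of Gabriel's theorem (HLP Theorem~373), and it is where all the real work is. You acknowledge this yourself by writing that ``this combinatorial core is essentially Gabriel's theorem,'' and you only sketch a Steiner-symmetrization strategy without carrying it out; the verification that a single pair-swap does not decrease the number of zero-sum triples is precisely the hard combinatorial lemma Gabriel proves, and it is not obvious. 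Leaving it at ``once established, the outline above closes cleanly'' means the proposal reduces the statement to a result you have not proved.

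There is also a structural worry in the inductive step. The lemma is stated only for the target value $0$, but when you ``cycle $h$ through the symmetric sequences,'' you are implicitly fixing a partial sum and applying a rearrangement bound at that fixed (generally nonzero) value, which is not what the induction hypothesis gives you. Hardy--Littlewood--P\'olya handle this by proving the stronger $n$-sequence statement directly from the three-sequence case by a more careful grouping (essentially: convolve all the symmetric sequences into one symmetric unimodal sequence first, then apply the three-sequence case once), rather than by peeling off one symmetric factor at a time. If you want a self-contained proof, following their Theorem~374 argument and citing Theorem~373 for the three-term core is the cleanest path; as written your outline does not close.
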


The final tool we shall require is a discrete counterpart of Birnbaum's \cite{Birnbaum} result on the peakedness of symmetric unimodal distributions. This result might be known, but we could not find it in the literature, so we provide a simple proof for the readers' convenience.
\begin{lemma}\label{bb}
    Let $X$, $Y$ and $Y'$ be independent symmetric unimodal integer random variables.
    Suppose $\pr(Y \in [-k, k]) \le \pr(Y' \in [-k, k])$ for any integer~$k\ge0$. 

    Then for any integer~$k \ge 0$
    \[
        \pr(X + Y \in [-k,k]) \le \pr(X + Y' \in [-k, k])
    \]
\end{lemma}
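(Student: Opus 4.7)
The plan is to exploit a mixture representation of symmetric unimodal distributions on $\Z$. Writing $p_j = \pr(X=j)$ and $\Delta_i = p_i - p_{i+1}$ (non-negative for $i \ge 0$ by unimodality), Abel summation gives $p_j = \sum_{i \ge |j|} \Delta_i$. Equivalently, $X$ has the same law as a mixture $\sum_{i \ge 0} \lambda_i U_i$, where $\lambda_i = (2i+1)\Delta_i \ge 0$, a short computation verifies $\sum_i \lambda_i = 1$, and each $U_i$ is uniform on $\{-i, -i+1, \ldots, i\}$. By linearity of probability, it suffices to prove the lemma with $X$ replaced by $U_i$ for each fixed $i \ge 0$.

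For this reduced problem, swapping the order of summation yields
\[
(2i+1)\,\pr(U_i + Y \in [-k, k]) \;=\; \sum_{m=-i}^{i} \pr(Y \in [-k-m,\, k-m]) \;=\; \E\, f_{i,k}(Y),
\]
where $f_{i,k}(y) := |\{-i, \ldots, i\} \cap \{-k-y, \ldots, k-y\}|$ counts the lattice points in the intersection of two symmetric integer intervals, one centred at $0$ and the other at $-y$. The key geometric observation is that $f_{i,k}$ is even in $y$ and non-increasing in $|y|$: sliding a fixed-width integer interval further from the origin can only shrink its overlap with a fixed interval centred at $0$.

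Finally, the hypothesis $\pr(Y \in [-k,k]) \le \pr(Y' \in [-k,k])$ for all $k \ge 0$ is precisely the statement that $|Y|$ stochastically dominates $|Y'|$. Hence $\E\, h(|Y|) \le \E\, h(|Y'|)$ for every non-increasing $h: \Z_{\ge 0} \to \R$; applying this with $h(r) = f_{i,k}(r)$ gives $\E\, f_{i,k}(Y) \le \E\, f_{i,k}(Y')$, which is the desired inequality for $U_i$ in place of $X$. Averaging against the weights $\lambda_i$ recovers the statement for general $X$. The only step requiring any thought is the mixture decomposition at the outset; once that is in place, reducing to uniform boxes turns the peakedness statement into a transparent stochastic-dominance inequality for a single non-increasing test function, and the remaining manipulations are routine.
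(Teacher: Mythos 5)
Your proof is correct, and it takes a genuinely different route from the paper's. The paper constructs an explicit coupling of $Y$ and $Y'$ (so that, given $Y = y$, $Y'$ has the same sign and $|Y'| \le |y|$), and then uses the observation that for symmetric unimodal $X$ the quantity $\pr(X \in [y-k, y+k])$ is non-increasing in $|y|$; conditioning on $Y$ then gives the inequality directly. You instead exploit the mixture representation of a symmetric unimodal integer random variable as a convex combination of uniform distributions $U_i$ on $\{-i, \ldots, i\}$, which reduces the claim to the single atom $X = U_i$. For that case, you rewrite $(2i+1)\pr(U_i + Y \in [-k,k])$ as $\E f_{i,k}(Y)$, where $f_{i,k}(y)$ counts lattice points in the overlap of $\{-i,\ldots,i\}$ with a window of width $2k+1$ centred at $-y$; this $f_{i,k}$ is even and non-increasing in $|y|$, so the test-function formulation of the stochastic dominance $|Y| \succeq_{\mathrm{st}} |Y'|$ finishes the job. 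Both arguments ultimately rest on the same geometric fact — sliding a window away from the origin cannot increase the mass it captures under a symmetric unimodal law — but the paper packages this as a coupling argument applied to $X$ whole, whereas you decompose $X$ into its uniform building blocks and turn the comparison into a one-line application of stochastic order against a single explicit monotone function. Your version is more elementary and avoids constructing a coupling; the paper's is shorter because it dispenses with the mixture step. Either would serve.
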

\begin{proof}
    $|Y'|$ is stochastically
    dominated by $|Y|$, so let us assume $Y'$ and $Y$ are coupled so
    that conditioned on $Y=y$,
    $Y'$ is zero or of the same sign as $y$ and $|Y'| \le |y|$.

    Since $X$ is symmetric and unimodal, for any $y, y' \in \mathbb{Z}$ if $0 \le y' \le y$ or $y \le y' \le 0$
    we have $\pr(X \in [y-k, y+k]) \le \pr(X \in [y'-k, y'+k])$ for any integer $k \ge 0$. 
    Therefore
    \begin{align*}
        & \pr(X + Y \in [-k,k]) = \pr(X - Y \in [-k,k]) = 
        \pr(X \in [Y-k,Y+k] ) =
        \\ &
        \E \E(\mathbb{I}_{X \in [Y-k, Y+k]} | Y)
        \le
        \E \E(\mathbb{I}_{X \in [Y'-k, Y'+k]} | Y) = 
        \pr(X + Y' \in [-k, k]).
    \end{align*}
\end{proof}

\medskip

The class of symmetric unimodal distributions is closed under convolution (see e.g. \cite{HLP} Theorem 375). Applying
Lemma~\ref{bb} with the $i$th term of the sum and the rest of the sum for each $i \in \{1,\dots,n\}$ we get:
\begin{corollary}\label{cor3}
    Let $X_{1},\ldots, X_{n},Y_{1},\ldots, Y_{n}$ be independent symmetric unimodal integer random variables. Suppose for $i\in \{1, \dots, n\}$ and any integer $k \ge 0$ we have $\pr(X_i \in [-k, k]) \le \pr(Y_i \in [-k, k])$. Then for any integer $k\ge 0$ we have
$$\pr(X_1+\cdots+X_n \in [-k, k])\leq \pr(Y_1+\cdots+Y_n \in [-k, k]).$$
\end{corollary}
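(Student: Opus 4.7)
The plan is a standard hybrid replacement argument. Define, for $j = 0, 1, \ldots, n$, the random variable
\[
    S_j = Y_1 + \cdots + Y_j + X_{j+1} + \cdots + X_n,
\]
so that $S_0 = X_1 + \cdots + X_n$ and $S_n = Y_1 + \cdots + Y_n$. I would prove that $\pr(S_{j-1} \in [-k,k]) \le \pr(S_j \in [-k,k])$ for each $j$; the corollary then follows by telescoping.

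Fix $j \in \{1, \dots, n\}$ and set
\[
    W_j = Y_1 + \cdots + Y_{j-1} + X_{j+1} + \cdots + X_n,
\]
so that $S_{j-1} = W_j + X_j$ and $S_j = W_j + Y_j$. The random variable $W_j$ is a sum of independent symmetric unimodal integer random variables, hence by the closure of this class under convolution (the cited HLP Theorem 375) $W_j$ is itself symmetric and unimodal on $\mathbb Z$, and it is independent of $X_j$ and $Y_j$. The hypothesis gives $\pr(X_j \in [-k,k]) \le \pr(Y_j \in [-k,k])$ for every integer $k \ge 0$, so Lemma~\ref{bb} applied with $X := W_j$, $Y := X_j$, $Y' := Y_j$ yields
\[
    \pr(S_{j-1} \in [-k,k]) = \pr(W_j + X_j \in [-k,k]) \le \pr(W_j + Y_j \in [-k,k]) = \pr(S_j \in [-k,k]).
\]

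Chaining these inequalities across $j = 1, \ldots, n$ gives the claim. The only nontrivial ingredient is knowing that $W_j$ remains symmetric and unimodal so that Lemma~\ref{bb} can be invoked at each hybrid step; this is supplied by the convolution-closure result cited from Hardy--Littlewood--P\'olya. There is no obstacle beyond bookkeeping the hybrid.
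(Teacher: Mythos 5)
Your hybrid replacement argument is precisely the approach the paper uses: the paper says to apply Lemma~\ref{bb} ``with the $i$th term of the sum and the rest of the sum for each $i\in\{1,\dots,n\}$,'' which is exactly your step $S_{j-1}\to S_j$ with $W_j$ playing the role of the ``rest of the sum.'' Your write-up is correct and just spells out explicitly the telescoping that the paper leaves implicit.
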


\begin{proofof}{Theorem~\ref{thm2}} Let $k = \frac n 2$. Lemma \ref{reduction} tells us that in order to maximize $\pr(X_1+\ldots+X_{2k}=x)$ it is sufficient to consider integer random variables $X_i$ such that $\pr(X_i=x)\leq \alpha$ for all $x\in \Z$. We can without loss of generality also assume that the distribution of each random variable $X_i$ is finitely supported; the general case follows by approximating with truncated random variables. The Krein--Milman theorem \cite{KM} tells us that the convex set of distributions $\mu$ on a finite set $\mathcal{X}\subset \Z$ such that $Q(\mu)\leq \alpha$ is the closure of the convex hull of its extreme points. The extreme points of a convex set $A$ are the points that do not lie in the interior of any segment in $A$. The extreme points for our situation are described by Lemma~\ref{extreme}: they are exactly the measures of the form $\mu_{\alpha, A, y}$ for some $A \subset \mathbb{Z}$ with $|A|=\lfloor \alpha^{-1} \rfloor$ and $y \in \mathbb{Z}\setminus A$. For each $i$ let us define $g(t)=\pr(X_1+\cdots+X_{2k}-X_i+t=x)$. We have $\pr(X_1+\ldots+X_{2k}=x)=\E g(X_i)$ and so if the distribution of $X_i$ is a linear combination of some collection of distributions, then $\E g(X_i)$ is a linear combination of expectations of $g$ with respect to each of these distributions. This means that we can assume that the maximum of $\pr(X_1+\ldots+X_{2k}=x)$ is attained when each $X_i$ has distribution $\mu_{\alpha, A_i, y_i}$ for some $A_i \subset \mathbb{Z}$ and $y_i\in \mathbb{Z} \setminus A_i$. Applying Lemma~\ref{l1} we obtain 
$$\pr(X_1+\ldots+X_{2k}=x)\leq \pr(Y_1-Y_2+\ldots+Y_{2k-1}-Y_{2k}=0),$$
where $Y_{i}$ are iid random variables distributed as some $X_j$. Let us denote the distribution of that particular $X_j$ by $\mu_{\alpha, A, y}$ (dropping the subscripts of $A_i$ and $y_i$). The random variables $Z_i=Y_{2i-1}-Y_{2i}$ are iid and symmetric. 
Let $(Z_i^*, i\in\{1, \dots, k\})$
be a sequence of iid random variables where the distribution of $Z_i^*$ is obtained from the symmetric decreasing rearrangement of the distribution of $Z_1$.
    Applying Lemma \ref{sym} we obtain  
\begin{eqnarray*}
\pr(Y_1-Y_2+\ldots+Y_{2k-1}-Y_{2k}=0)&=&\pr(Z_1+\ldots+Z_k=0)\\
&=&\sum_{m_1+\cdots+m_k=0}\pr(Z_1=m_1)\ldots \pr(Z_k=m_k)\\
&\leq& \sum_{m_1+\cdots+m_k=0}\pr(Z_1=m_1)^{\ast}\ldots \pr(Z_k=m_k)^{\ast}\\
&=&\pr(Z_{1}^{\ast}+\ldots+Z_{k}^{\ast}=0).
\end{eqnarray*}
We have now achieved an inequality for the probability in question in terms of symmetric unimodal distributions to which Corollary~\ref{cor3} applies. All that is left to prove is the stochastic domination condition $\pr(|Z_{i}^{\ast}|\leq l)\leq \pr(|U_{2i-1}^\alpha-U_{2i}^\alpha|\leq l)$ for all integers $l$.
    We shall actually 
    show
    that 
    $$\pr(|Z_i^\ast| \le l) = \max_{B\subset\mathbb{Z}, |B|=2l+1}\pr(Z_i\in B)\leq \pr(|U_{2i-1}^\alpha-U_{2i}^\alpha|\leq l).$$
    The first equality follows from 
    the definition of the symmetric decreasing rearrangement.
    For the inequality we will use Lemma~\ref{sym} again.

    Let us denote by $U(-B)$ a uniform random variable on the set $\{-x : x\in B\}$ which is independent of the 
    previously defined random variables.
    In the case $B=\{-l,\ldots,l\}$ we shall denote this random variable by $U$. Recalling that $Y_1$,$\dots$,$Y_{2k}$ have distribution $\mu_{\alpha, A, y}$ and using Lemma~\ref{sym} we obtain
\begin{eqnarray*}
&&(2l+1)\pr(Z_i\in B)=\pr(Y_{2i-1}-Y_{2i}+U(-B)=0)\\
&=&\sum_{r+s+t=0}\pr(Y_{2i-1}=r)\pr(-Y_{2i}=s)\pr(U(-B)=t)\\
&\le&\sum_{r+s+t=0}{^{+}\pr(Y_{2i-1}=r)\,\pr(-Y_{2i}=s)^{+}\,\pr(U(-B)=t)^{\ast}}\\
    &=&\sum_{r+s+t=0}\mathbb{P}(U^{\alpha}_{2i-1}-\left \lfloor\lfloor \alpha^{-1} \rfloor/2 \right \rfloor=r)\mathbb{P}(-U^{\alpha}_{2i}+\left \lfloor \lfloor \alpha^{-1} \rfloor/2\right \rfloor=s)\mathbb{P}(U=t)\\
&=& \mathbb{P}(U^{\alpha}_{2i-1}-U^{\alpha}_{2i}+ U=0)\\
&=& (2l+1)\mathbb{P}(U^{\alpha}_{2i-1}-U^{\alpha}_{2i}\in \{-l,\ldots,l\})
\end{eqnarray*}
and we are done.
\end{proofof}

\medskip

\begin{proofof}{Corollary \ref{constant}.}
    Let $n$ be even. The Local Limit Theorem, e.g. Theorem~1 of \cite{davis1995elementary}, applied to the sum of $\frac n 2$ iid random variables $U^{\alpha}_{2i} - U^{\alpha}_{2i-1}$ gives
     \begin{align*}
         & \pr(U^{\alpha}_1-U^{\alpha}_2+\cdots+U^{\alpha}_{n-1}-U^{\alpha}_n = 0) = \left( 2 \pi \frac n 2 \Var(U^{\alpha}_1 - U^{\alpha}_2)\right)^{-\frac 1 2} (1+o(1))
        \\ &
         =  (2 \pi n \Var(U^\alpha))^{-\frac 1 2} (1 + o(1)). 
    \end{align*}
    Clearly the conditions of \cite{davis1995elementary}
    are satisfied since the support of $U^\alpha_1 - U^\alpha_2$ is an interval.

    Let $k=\lfloor \alpha^{-1} \rfloor$. 
    By a simple calculation using the fact that $U^\alpha$ is a mixture of $U^{\frac 1 {k+1}}$ with probability $\lambda = (k+1) (1-k\alpha)$ and $U^{\frac 1 k}$ with probability $1-\lambda$ it follows that
    \[
        \Var(U^\alpha) = \frac 1 {12} k (k+1) \alpha (2 + 4k -3 \alpha k^2 - 3 \alpha k).
    \]
    Applying Theorem~\ref{thm2} completes the proof for even $n$.
%
    For odd $n$ the same asymptotics follow by Remark~\ref{rmk.1}.
\end{proofof}

\section{Bernoulli distributions}

%

We now focus on 
the case where  $a_1, \dots, a_n \in \mathbb{R}^d\setminus \{0\}$ and
$X_1, \dots, X_n$ are iid Bernoulli with parameter $p$ ($p = 1 - \alpha$). The dimension $d$
and $p \in (0, \frac 1 2]$ will be fixed.

 Define
\begin{equation}\label{eq.Tn}
    T_n = T_{n,p} = \bern_1 - \bern_2 + \dots + (-1)^{n+1} \bern_n.
\end{equation}

Note that $T_n \sim Binom(\lceil \frac n 2 \rceil, p) - Binom(\lfloor \frac n 2 \rfloor,p)$, where
by a difference of distributions we denote the distribution of the difference of independent
random variables from the corresponding distributions.

By Corollary~1\ref{cor1} for even $n$ we have
\[
    \pr(\sum a_i X_i = x) \le \pr(T_n = 0).
\]
The situation for odd $n$ is more subtle. We are still able to prove the following. 
\begin{lemma}\label{lem.odd_bernoulli}
    Let $p \in (0, \frac 1 2]$.
    For all $n$ large enough the following holds.
    If $\bern_1, \bern_2, \dots, \bern_n$ are independent Bernoulli random variables with parameter $p$, $a_1, \dots, a_n \in \mathbb{R}^d\setminus\{0\}$ and $x \in \mathbb{R}^d$ then
\begin{equation}\label{eq.odd}
    \pr(\sum a_i \bern_i = x) \le \max_{0\le k \le n} \max_{x' \in \{\lfloor \mu \rfloor, \lceil \mu \rceil\}} \pr(B_{n-k,p} - B_{k,p}' = x'),
\end{equation}
    where $B_{n-k,p}, B_{k,p}'$ are independent binomial random variables with parameters $(n-k, p)$ and $(k, p)$ respectively, and $\mu = (n-2k)p = \E B_{n-k,p} - \E B_{k,p}'$.
\end{lemma}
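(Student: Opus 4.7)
The plan begins with a reduction to $d=1$: project the weights $a_i\in\R^d$ and $x$ onto a generic line in $\R^d$, which preserves $a_i\ne 0$ and can only increase the target point probability. The lemma is immediate for even $n$ from Corollary~1\ref{cor1} (the entry $k=n/2$, $x'=0$ of \eqref{eq.odd} is precisely the bound there), so I focus on $n=2m+1$ odd.

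Condition on $X_n$ to write
\[
\pr\bigl(\textstyle\sum_{i=1}^n a_i X_i = x\bigr) = p\,\alpha + (1-p)\,\beta,
\]
where $\alpha=\pr(S=x-a_n)$, $\beta=\pr(S=x)$, and $S=\sum_{i=1}^{n-1}a_iX_i$ has an \emph{even} number of summands. Corollary~1\ref{cor1} applied to $S$ gives the pointwise bound $\max(\alpha,\beta)\le\pr(T_{n-1,p}=0)=:A$, but a short calculation (Vandermonde at $p=\tfrac12$, local CLT in general) shows that $A$ already exceeds the RHS of \eqref{eq.odd} by a factor $1+\Theta(1/n)$, so this is not quite tight. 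The extra leverage is that $\alpha$ and $\beta$ are masses at two \emph{distinct} atoms of $S$, separated by $a_n\ne 0$. Granting a two-atom refinement
\[
\alpha+\beta\le\pr(T_{n-1,p}=0)+\pr(T_{n-1,p}=1)=A+B,
\]
together with $\max(\alpha,\beta)\le A$ and $p\le\tfrac12$, an elementary maximisation of $p\alpha+(1-p)\beta$ over this feasible region yields
\[
p\alpha+(1-p)\beta\le(1-p)A+pB=\pr(B_{m+1,p}-B_{m,p}'=0),
\]
which is exactly the entry at $k=m$, $x'=0$ in the RHS of \eqref{eq.odd}; crucially, the sign of $a_n$ plays no role in this bound.

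The main obstacle is establishing the two-atom refinement $\alpha+\beta\le A+B$ uniformly in the weight vector $a$. Lemma~\ref{l1} controls only the single largest atom of $S$ via AM--GM; extending this to control the sum of two atoms simultaneously requires either a pairwise strengthening of the AM--GM step in the proof of Lemma~\ref{l1}, or a rearrangement/peakedness argument in the spirit of Lemma~\ref{sym} and Corollary~\ref{cor3} from Section~3 (comparing the symmetric-decreasing rearrangement of the law of $S$ with that of $T_{n-1,p}$, and then invoking a discrete Birnbaum-type peakedness inequality). The hypothesis ``$n$ large enough'' enters to absorb error terms that arise when the support of $S$ is not itself an arithmetic progression; in the special case when $a$ is already a $\pm 1$ configuration the inequality reduces to a direct calculation from the conditioning identity.
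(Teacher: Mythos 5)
Your conditioning step is a clean idea, and the observations leading up to it (the reduction to $d=1$ by projection, the immediacy of the even case from Corollary~1, and the fact that the naive single-atom bound $\max(\alpha,\beta)\le A$ is off by a factor $1+\Theta(1/n)$) are all correct. The linear programming step at the end is also carried out correctly. However, the two-atom refinement you ``grant'' --- $\alpha+\beta\le A+B:=\pr(T_{n-1}=0)+\pr(T_{n-1}=1)$ --- is not a technical gap to be patched; it is \emph{false}. Take $a_1=\dots=a_{n-1}=1$, so that $S=B_{n-1,p}$ and $\alpha,\beta$ are two consecutive binomial atoms. Already for $n=3$ and $p=0.4$: $\alpha+\beta=\pr(B_2=0)+\pr(B_2=1)=0.36+0.48=0.84$, while $A+B=(p^2+(1-p)^2)+p(1-p)=0.52+0.24=0.76<0.84$. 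The underlying reason is that the balanced difference $T_{n-1}$ is extremal for the \emph{single} largest atom, but its symmetry around $0$ forces $\pr(T_{n-1}=1)<\pr(T_{n-1}=0)$, whereas a distribution whose mean sits near a half-integer can put nearly equal, and jointly larger, mass on a pair of consecutive points. Consequently the conclusion you derive, namely $\pr(\sum a_iX_i=x)\le (1-p)A+pB=\pr(B_{m+1,p}-B_{m,p}'=0)$, is also false: in the example above it would read $\pr(B_{3,0.4}=1)=0.432\le 0.408$, which is wrong. More to the point, your bound is the single entry $k=m=(n-1)/2$ of the right-hand side of~(\ref{eq.odd}), and the paper explicitly observes (and verifies numerically, and cites Singhal for a precise analysis) that the maximizing $k$ in~(\ref{eq.odd}) depends on $p$ in a complicated way and can be any of $\{0,\dots,(n-1)/2\}$; if the $k=(n-1)/2$ term dominated universally there would be no need for a $\max$ over $k$ at all. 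So the example $a_i\equiv 1$, which attains the $k=0$ entry, is exactly the source of the failure.

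The paper's route is quite different and does not attempt any multi-atom concentration bound. Lemma~\ref{lem.large_odd} shows, via a more careful bookkeeping of repeated applications of~(\ref{eq.agm}) followed by the small-deviation expansion of Lemma~\ref{lem.smalldev}, that whenever the coefficients $a_i$ contain at least two types \emph{and} are not of the form ``all but one of a single type'', the probability is at most $\pr(T_n=0)\bigl(1-(2n)^{-1}+o(n^{-1})\bigr)$, which for large $n$ is strictly below $\pr(T_n=0)$, the $k=\lfloor n/2\rfloor$ entry of~(\ref{eq.odd}). This is where ``$n$ large enough'' is genuinely used --- not to control the support of $S$, but to make the $o(n^{-1})$ error term subordinate to the explicit $-(2n)^{-1}$. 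The remaining case, all but one $a_i$ of a single type $\pm c$, is then handled by Darroch's theorem on the strict unimodality of Bernoulli sums: after factoring out $c$, the maximizing $x$ must be $c$ times a mode of $\sum\tilde a_i X_i$ with $\tilde a_i\in\{-1,1\}$, and the last coefficient $a_n$ is also forced into $\{-c,c\}$, which puts you precisely in the one-dimensional $\pm1$ situation enumerated by the $\max$ over $k$ and the two values of $x'$ in~(\ref{eq.odd}). If you want to salvage the conditioning idea, the correct target after conditioning on $X_n$ is not a two-atom bound against the balanced $T_{n-1}$ but a comparison against a \emph{family} of $\pm1$ configurations on $n-1$ variables, which essentially reproduces the paper's argument.
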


By Lemma~\ref{l1} for even $n$ the only value that maximizes the right side of (\ref{eq.odd}) is $k=\frac n 2$.
For odd $n$ we cannot explicitly describe the optimal $k$ in (\ref{eq.odd}). 
As $\max_x \pr(B_{n-k,p} - B_{k,p}' = x)$ has the same order asymptotic growth for any $k=k(n)$, the answer requires maximizing the second order asymptotic term over all $k$. It seems that for a given $n$ each $k \in \{0, \dots, \frac {n-1} 2\}$ can
be optimal depending on $p$ in a complicated way (we confirmed this using \texttt{distr} package of \texttt{R} \cite{distr} for, e.g., $n \le 31$). One can use Lemma~\ref{lem.large_odd} and Lemma~\ref{lem.taylorTn} stated below in this section to show that for sufficiently large odd $n$ neither of $k \in \{0, \frac{n-3} 2, \frac{n-1} 2 \}$ dominates for all $p \in (0,0.5)$; this provides a counterexample to a conjecture in an early version of~\cite{FKS}.

Independently, Singhal~\cite{Singhal} proves a result similar to Lemma~\ref{lem.odd_bernoulli} for all $n$.
For fixed $p$ he also determines the asymptotically 
optimal proportion of $+1$s and $-1$s (i.e. $k/n$) using
characteristic functions. This proportion depends non-continuously on
the number $p$ and can be very far from balanced.

Lemma~\ref{lem.odd_bernoulli} follows from a slightly stronger result, Lemma~\ref{lem.large_odd}, which we prove next.
Recall that random vectors $X$ and $Y$ have the same type if and only if either $Y$ or $-Y$ has the same
distribution as $X$. Similarly we say that $x,y \in \mathbb{R}^d$ have the same type if and only if $x \in \{-y, y\}$.

\begin{lemma}\label{lem.large_odd}
    Let $p \in (0, \frac 1 2]$. Let $\bern_1, \dots, \bern_n$
    be independent Bernoulli random variables with parameter $p$. 
    Let $T_n$ be as in (\ref{eq.Tn}).

    There is a sequence $\delta_n = o(n^{-1})$ such that
    for any\footnote{Here the $d$-dimensional zero vector is also denoted as 0.} $a_1, \dots, a_n \in \mathbb{R}^d \setminus \{0\}$ with 
    at least two types and any $x \in \mathbb{R}^d$ then either (a) all but one of the coefficients are of the same type or (b)
    \[
        \pr(\sum_{i=1}^n a_i \bern_i = x) \le \pr(T_n = 0) (1 - (2n)^{-1} + \delta_n).
    \]
\end{lemma}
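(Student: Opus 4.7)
The plan is to quantify the strict inequality in the balancing procedure of Lemma~\ref{l1} under the assumption that condition~(a) fails. The key tool is the sharp identity $ab = \tfrac{1}{2}(a^2+b^2) - \tfrac{1}{2}(a-b)^2$ that refines the AM-GM step. After reducing odd $n$ to even via Remark~\ref{rmk.1}, I partition $\{1,\dots,n\}$ into halves $I, J$ and set $S = \sum_{i \in I} a_i \bern_i$, $T = \sum_{i \in J} a_i \bern_i$. Applying the identity termwise to $\pr(S+T=x) = \sum_y \pr(S=y)\pr(-T=y-x)$ yields
\[
    \pr(\sum a_i \bern_i = x) \le \tfrac{1}{2}\pr(S-S'=0) + \tfrac{1}{2}\pr(T-T'=0) - \tfrac{1}{2}\|f_S - \tau_x f_{-T}\|_2^2,
\]
where $f_U$ is the PMF of $U$ and $\tau_x$ is translation by $x$. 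Each of $S-S'$ and $T-T'$ is a sum of $n$ independent scaled Bernoullis, so Corollary~1 bounds each by $\pr(T_n=0)$, giving
\[
    \pr(\sum a_i \bern_i = x) \le \pr(T_n=0) - \tfrac{1}{2}\inf_x \|f_S - \tau_x f_{-T}\|_2^2.
\]
The problem reduces to showing $\inf_x \|f_S - \tau_x f_{-T}\|_2^2 \ge \pr(T_n=0)/n \cdot (1 - o(1))$ for some judicious partition.

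For the partition, I follow the proof of Lemma~\ref{l1}: if three or more types appear among $(a_i)$, place single representatives of two distinct minority types in different halves; if exactly two types appear (then both of size $\ge 2$ by the negation of~(a) and at least two types), place the whole minority class into one half. In either case $f_S$ and $f_{-T}$ differ non-trivially beyond a translation, because a lone minority-type contribution on one side cannot be absorbed into the majority structure on the other. To lower bound the Fourier gap, I use Parseval's identity $\|f_S - \tau_x f_{-T}\|_2^2 = (2\pi)^{-d}\int |\hat f_S(\xi) - \hat f_{-T}(\xi) e^{-i x \cdot \xi}|^2 d\xi$ with the explicit product $\hat f_S(\xi) = \prod_{i \in I}(1 - p + p e^{i a_i \cdot \xi})$. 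Near $\xi = 0$ both characteristic functions are Gaussian-like of width $\Theta(n^{-1/2})$, carrying the bulk of their $L^2$ mass. The asymmetric minority placement perturbs the quadratic coefficient in $\log|\hat f|$ at $0$ by $\Theta(1)$, and this perturbation is in the magnitude, hence invisible to the translation $e^{-ix \cdot \xi}$ which only moves the phase. The resulting pointwise magnitude gap of $\Theta(n^{-1})$ inside the Gaussian window integrates to a squared $L^2$ gap of $\Theta(n^{-3/2})$, matching $\pr(T_n=0)/n$ asymptotically.

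The main obstacle is pinning down the \emph{exact} leading coefficient $(2n)^{-1}$ rather than merely $\Omega(n^{-1})$, since only the slack $\delta_n = o(n^{-1})$ is allowed. This requires matching the leading constant in the Fourier-gap estimate against the leading local-limit constant in $\pr(T_n=0) \sim \sqrt{2/(\pi n p(1-p))}$. A careful second-order Taylor expansion of the two characteristic function products around $\xi = 0$ — isolating the translation-invariant curvature mismatch and verifying uniformity over all admissible $(a_i)$ satisfying not-(a) — is the essential computation; an auxiliary asymptotic lemma in the spirit of the Lemma~\ref{lem.taylorTn} referenced later in Section~4 is likely invoked here. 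The two-type case with both classes of size $\ge 2$ is the tightest configuration and drives the constant, while the three-or-more-types case yields a strictly larger gap and is absorbed into $\delta_n$.
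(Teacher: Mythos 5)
Your first two steps — the polarization identity $\langle f_S,\tau_x f_{-T}\rangle=\tfrac12\|f_S\|_2^2+\tfrac12\|f_{-T}\|_2^2-\tfrac12\|f_S-\tau_x f_{-T}\|_2^2$ followed by bounding $\|f_S\|_2^2=\pr(S-S'=0)$ and $\|f_{-T}\|_2^2=\pr(T-T'=0)$ via Corollary~1 — are valid and are essentially a rewriting of the single AM--GM step (\ref{eq.agm}). But from there you diverge genuinely from the paper, which does not try to lower-bound an $L^2$ gap at all; instead it iterates (\ref{eq.agm}) carefully so as to keep at least two types at each step, reduces to a canonical form $aT_{2(m-1)}+bT_2'$ with $b=2a$, and then computes the ratio to $\pr(T_n=0)$ explicitly via the small-deviation asymptotics of Lemma~\ref{lem.smalldev}. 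You should also note that $(2n)^{-1}$ is not the sharp constant in the lemma (the paper's final comparison gives a gap of at least $n^{-1}\min(\tfrac{5-6p}{2(1-p)},3)\ge 2n^{-1}$); so the ``main obstacle'' you identify, matching the exact leading coefficient, is a misconception — only a lower bound on the gap is needed.

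The proposal has genuine gaps that keep it from being a proof. First, the Parseval identity you invoke, $\|f_S-\tau_x f_{-T}\|_2^2=(2\pi)^{-d}\int|\hat f_S(\xi)-e^{-ix\cdot\xi}\hat f_{-T}(\xi)|^2\,d\xi$, is false for discrete random vectors whose atoms do not lie on a common lattice: the characteristic function of a discrete law is almost periodic and not in $L^2(\R^d)$, so the right side diverges. Since the $a_i\in\R^d$ are arbitrary nonzero vectors, you cannot assume a lattice structure without first performing a reduction analogous to the paper's step that forces $b=ra$ with integer $r$. Second, the central quantitative claim — that the asymmetric placement perturbs the curvature of $\log|\hat f|$ at $0$ ``by $\Theta(1)$'' — is unsupported and in general false: the quadratic mismatch $\sum_{i\in I}(a_i\cdot\xi)^2-\sum_{j\in J}(a_j\cdot\xi)^2$ scales with the squared norms of the $a_i$ and can be made arbitrarily small or large relative to $\Sigma(\xi)$, so it does not yield a uniform $\Theta(n^{-3/2})$ lower bound on the $L^2$ gap without substantial additional normalization and case analysis. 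Third, reducing odd $n$ to $n-1$ via Remark~\ref{rmk.1} leaves you comparing against $\pr(T_{n-1}=0)$, which exceeds $\pr(T_n=0)$ by a factor $1+\Theta(n^{-1})$ of the same order as the claimed gap; the paper handles this via the explicit odd-$n$ asymptotics in Lemma~\ref{lem.smalldev}, but your write-up does not address it. As it stands the argument reaches the identity $\pr(\sum a_iX_i=x)\le\pr(T_n=0)-\tfrac12\inf_x\|f_S-\tau_xf_{-T}\|_2^2$ and then asserts, without proof and relying on a false Fourier formula, the one estimate that actually constitutes the content of the lemma.
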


We will need asymptotics for small deviations of a binomial random variable convolved with its negation.

\begin{lemma} \label{lem.smalldev}
    Let $p \in (0, \frac 1 2]$. Let $k$ be a positive integer.
    Let $T_{2n} = T_{2n,p} \sim Binom(n,p) - Binom(n,p)$.
    Then
    \begin{align*}
        &\frac {\pr(T_{2n} = k)} {\pr(T_{2n} = 0)} = 1 - \frac {k^2} {4 p (1-p) n} (1 + o(1)).
    \end{align*}
\end{lemma}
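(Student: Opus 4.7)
The plan is Fourier inversion combined with a Laplace-type asymptotic analysis. Writing $q=1-p$, the random variable $T_{2n}$ is symmetric and integer-valued, with characteristic function
\[
    \phi_n(t) = \E e^{i t T_{2n}} = |pe^{it} + q|^{2n} = (1 - 4pq \sin^2(t/2))^n.
\]
By Fourier inversion on $[-\pi,\pi]$ and the evenness of $\phi_n$,
\[
    \pr(T_{2n}=0) - \pr(T_{2n}=k) = \frac{1}{\pi}\int_{-\pi}^{\pi} \sin^2(tk/2)\,\phi_n(t)\,dt.
\]

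Next I would rescale $t = u/\sqrt{n}$. The elementary inequality $\sin^2(x/2) \ge x^2/\pi^2$ for $x \in [-\pi,\pi]$ gives the $n$-uniform bound $\phi_n(u/\sqrt n) \le \exp(-4pq u^2/\pi^2)$, which provides a Gaussian dominating function on the whole rescaled domain and removes the need for any separate tail-cutoff argument. Pointwise as $n\to\infty$,
\[
    n \sin^2\bigl(uk/(2\sqrt{n})\bigr)\,\phi_n(u/\sqrt{n}) \longrightarrow \tfrac{1}{4}k^2 u^2 \, e^{-pq u^2}.
\]
Dominated convergence then yields
\[
    \pr(T_{2n}=0) - \pr(T_{2n}=k) = \frac{k^2}{4\pi n^{3/2}}\int_{-\infty}^\infty u^2 e^{-pq u^2}\,du\,(1+o(1)) = \frac{k^2}{8\sqrt{\pi}\,(pq)^{3/2}\, n^{3/2}}\,(1+o(1)),
\]
using the standard Gaussian moment $\int u^2 e^{-a u^2}du = \sqrt{\pi}/(2 a^{3/2})$.

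The same argument applied without the factor $\sin^2(tk/2)$ (equivalently, the local limit theorem applied to $T_{2n}$, whose variance is $2npq$) gives the well-known $\pr(T_{2n}=0) = (2\sqrt{\pi n pq})^{-1}(1+o(1))$. Dividing the two asymptotics and rearranging yields
\[
    \frac{\pr(T_{2n}=k)}{\pr(T_{2n}=0)} = 1 - \frac{k^2}{4 p(1-p) n}(1+o(1)),
\]
as claimed. The main obstacle is purely technical: making sure that the pointwise asymptotics of the rescaled integrand are controlled uniformly enough for dominated convergence. The inequality $\sin^2(x/2) \ge x^2/\pi^2$ is the key ingredient that makes the Gaussian dominator explicit.
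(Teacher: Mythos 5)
Your Fourier-analytic argument is correct and complete, but it is a genuinely different route from the one taken in the paper. The paper decomposes $T_{2n}$ probabilistically: each summand $\bern_j - \bern_j'$ is written as a mixture of the point mass at $0$ (probability $(1-2p)^2$) and a fair $\pm 1$ step (probability $p'=4p(1-p)$), so that $T_{2n} = -N + \mathrm{Binom}(2N,\tfrac12)$ with $N\sim\mathrm{Binom}(n,p')$. Conditioned on $N=t$ the ratio $\pr(T_{2n}=k\mid N=t)/\pr(T_{2n}=0\mid N=t) = \binom{2t}{t+k}/\binom{2t}{t} = 1 - k^2/t + O(t^{-2})$, and the lemma follows by a concentration bound showing $N$ is tightly clustered around $np'$. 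Your proof instead works entirely through the characteristic function $\phi_n(t)=(1-4p(1-p)\sin^2(t/2))^n$, expressing $\pr(T_{2n}=0)-\pr(T_{2n}=k)$ as $\tfrac1\pi\int_{-\pi}^{\pi}\sin^2(tk/2)\phi_n(t)\,dt$ and extracting the leading term by rescaling plus dominated convergence, with the inequality $\sin^2(x/2)\ge x^2/\pi^2$ on $[-\pi,\pi]$ furnishing a Gaussian dominating function on the full rescaled domain. Both arguments are sound; yours is more analytic and has the small advantage of treating $p=\tfrac12$ uniformly (the paper must handle that case separately because then $N\equiv n$), while the paper's is self-contained, elementary, and fits the overall combinatorial-probabilistic flavour of that section. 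One could also note that your method generalises immediately to extracting higher-order terms by expanding both $\sin^2$ and $\phi_n$ further, essentially the route the paper takes later in Lemma~\ref{lem.taylorTn} via generating functions.
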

\begin{proof}
    We have 
    \[
        T_{2n} = \bern_1 + \dots  + \bern_n -  \bern_1' - \dots -  \bern_n', 
    \]
    where $\bern_j, \bern_j'$, $j \in \{1, \dots,n\}$ are independent Bernoulli with parameter $p$.

    On the other hand $\bern_j - \bern_j'$ is distributed as a mixture of 0 (with probability $1-p' = (1-2p)^2$) and $\tilde{\bern}_{2j} + \tilde{\bern}_{2j+1} - 1$ (with probability $p' = 4 p (1-p)$). Here $\tilde{\bern}_{2j}$ and $\tilde{\bern}_{2j+1}$ are two independent Bernoulli random variables with parameter $\frac 1 2$.
 
    Thus
    \[
        T_{2n} = -N + B_{2N} \mbox{ where } N \sim Binom(n, p')
    \]
    and conditioned on $N=t$, $B_{2N} \sim Binom(2t, \frac 1 2)$.

    Conditioned on $N = t$, $t\ne 0$, $T_{2n}$ is unimodal and symmetric with mode at 0.
    For any positive integer $k$
    \begin{align*}
        &\frac {\pr(T_{2n}=k | N = t)} {\pr(T_{2n}=0 | N = t)} = \frac {\binom {2t} {t+k}} {\binom {2t} t} 
        = \frac{ (t-k+1) \dots t} {(t+1) \dots (t+k)} 
        =\frac {\prod_{j=0}^{k-1} (1-j/t) } {\prod_{j=1}^k (1 + j/t)} \\
        &=(1 - \sum_{j=1}^{k-1} \frac j t + O(t^{-2})) (1 - \sum_{j=1}^k \frac j t + O(t^{-2})) = 1-\frac {k^2}  t + O(t^{-2}).
    \end{align*}
    If $p = \frac 1 2$ we have $\pr(N=n)=1$ and the lemma follows, so we will assume $p < \frac 1 2$.
    Now let $A_C$ be the event
    $|N - np'| \le C \sqrt{n \ln n}$.
    Fix $C$ large enough so that by the concentration of the binomial random variable $N$ we have $\pr(\bar{A}_C) = o(n^{-3/2})$ (by, e.g., Theorem~2.1 of \cite{cmcd98}, $C$ can be any constant larger than $\sqrt{3}/2$).

    For any $t$ such that $A_C$ holds on $N=t$ 
    \begin{align*}
        &\pr(T_{2n}=k | N=t) = \pr(T_{2n}=0 | N=t) (1 - \frac {k^2} {np'} (1+o(1)))
    \end{align*}
    where the constant in $o()$ depends only on $k$ and $C$.
    By
    Stirling's approximation, see also Lemma~\ref{lem.taylorTn} below, 
    we have $\pr(T_{2n}=0) = \Theta(n^{-\frac 1 2})$. 
    So
    \begin{align*}
        &\pr(T_{2n} = k) = \E\E (\mathbb{I}_{A_C} \mathbb{I}_{T_{2n}=k} | N)  +   \E\E (\mathbb{I}_{\bar{A}_C} \mathbb{I}_{T_{2n}=k} | N); 
        \\ &
        \E\E (\mathbb{I}_{\bar{A}_C} \mathbb{I}_{T_{2n}=k} | N) \le \pr(\bar{A}_C) = o(n^{-1} \pr(T_{2n}=0)); 
        \\ &
        \E\E (\mathbb{I}_{A_C} \mathbb{I}_{T_{2n}=k} | N) =  \E\E (\mathbb{I}_{A_C} \mathbb{I}_{T_{2n}=0} | N) \left(1-\frac{k^2} {np'} + o(n^{-1})\right)
    \end{align*}
    which completes the proof since
    \begin{align*}
        \pr(T_{2n} = 0)  \ge  \E\E (\mathbb{I}_{A_C} \mathbb{I}_{T_{2n}=0} | N) \ge  \pr(T_{2n} = 0) - \pr(\bar{A}_C) = \pr(T_{2n} = 0) (1-o(n^{-1})). 
    \end{align*}
\end{proof}

\bigskip

\begin{proofof}{Lemma~\ref{lem.large_odd}}
    Let $S_n = \sum_{i=1}^n a_i \bern_i$. Then 
    \[
        \pr(S_n = x) = (1-p) \pr(S_{n-1} = x) + p \pr(S_{n-1} = x-a_n) \le \pr(S_{n-1}=x')
    \]
    for some $x' \in \{x, x-a_n\}$.
    Let $m = \lfloor \frac n 2 \rfloor$ and consider $S_{2m}$. 
    Define $n_a = |\{i: i \le 2m \mbox{ and }  a_i \in \{-a,a\}\}|$ and let $c$ maximize $n_a$ over $a$.

    Suppose first $n_c \le 2m-2$. 

    Let $x' \in \mathbb{R}^d$ be arbitrary.
    By (\ref{eq.agm}) applied to random vectors
    $a_1 \bern_1$, $\dots$, $a_{2m-1} \bern_{2m-1}$, $a_{2m} \bern_{2m} - x'$,
    placing exactly $\lceil \frac {n_c} 2 \rceil$ of the terms
    with $a_i \in \{-c,c\}$ into the first half, we get
    \[
        \pr(S_{2m} = x') \le \pr(\sum_{i=1}^{2m} {a_i'} \bern_i = 0)
    \]
    Denote by $n_a$ the sum of multiplicities of $a$ and $-a$ in $\{a_i': i \in \{1, \dots, 2m\}\}$. As these multiplicities are equal, see the proof of Lemma~\ref{l1}, $n_a'$ is even, and by our choice of ordering $n_a' \le 2m-2$. So there are at least two equivalence classes (types) of random variables in the resulting sum.
    
    We claim that we can keep applying (\ref{eq.agm}) as in the proof of Lemma~\ref{l1},
    and stop when exactly two types of random variables remain.
    
    If there are at least four equivalence classes, place half of the $n_1$ variables from the largest equivalence class into the first half of the sum, and the remaining $\frac{n_1}2$ variables into the second half of the sum. Note that we can then place the variables from the smallest and the second smallest equivalence classes into different halves, so that after an application of (\ref{eq.agm}) the number of classes is reduced by at least one and remains at least two. 

    If there are three equivalence classes, arrange the sum so that the $n_2$ terms
    from the second biggest class go first, then the $n_1$ terms from the biggest class
    and finally the $n_3$ terms from the smallest class. We have $n_1 + n_2 \ge 2 (2 m)/3$.
    As $n_1, n_2, n_3 \ge 2$, $n_2 \le (2m - n_3)/2 \le m-1$. Hence the first 
    half contains all $n_2$ elements from the second biggest class, and at least one element from
    the biggest class, while the second half contains $n_1 + n_2 - m \ge \frac m 3$ elements
    of the biggest class and all $n_3$ elements from the smallest class. Applying (\ref{eq.agm})
    one more time, exactly two equivalence classes remain. Hence after at most $n$ applications of (\ref{eq.agm})
    in total we get
    \begin{equation}\label{eq.2binomial}
        \pr(S_{2m} = x') \le \pi(a,b,k) := \pr(a T_{2k} + b T_{2(m-k)}' =0)
    \end{equation}
    where $a, b \in \mathbb{R}^d \setminus \{0\}$, $a \not \in \{-b, b\}$,  $k, m-k \ge 1$ and
    $T_{2k}, T_{2(m-k)}'$ are independent, $T_{2(m-k)}' \sim T_{2(m-k)}$.

    Denote by $S_{2m}'$ the sum of $2m$ Bernoulli($p$) random variables with multipliers
    $a,-a, b$ and $-b$ corresponding to the right side of (\ref{eq.2binomial}). 
    We claim that 
    \begin{equation*}
        \pi(a,b,k) \le \max (\pi(a,b,1), \pi(a,b,m-1)).
    \end{equation*}
    To see this, keep applying (\ref{eq.agm}) as in the proof of Lemma~\ref{l1}
    starting with $S_{2m}'$ by taking exactly one term in the smaller equivalence class
    in the first half $S$. Either at some point we get that $\pi(a,b,k) \le \pr(S + T =0)$ with $T \sim - S$,
    or we obtain a cycle in a finite number of applications, where again as in the proof of Lemma~\ref{l1} the same inequality with $S \sim -T$ must hold. 

    Thus by possibly swapping $a$ and $b$
    \[
        \pr(S_{2m} = x') \le \pr(a T_{2(m-1)} + b T_2' = 0).
    \]
    Let $X = T_{2(m-1)}$ and $Y = T_2'$.
    Note that whenever $b \ne ra$ for some $r \in \mathbb{Z}$, $a X + b Y = 0$ if and only if $X=0$ and $Y=0$,
    whereas if $b=2a$ and $m\ge 2$ there are additional possibilities for $aX + bY=0$, so we can assume $b = r a$
    for $r \in \mathbb{Z} \setminus \{-1, 0, 1\}$, which reduces the right part of the last inequality to the one-dimensional case $a=1$ and $b=r$.

    Furthermore, $X$ is symmetric and (strongly) unimodal with mode 0 \cite{darroch}.
    Therefore, if $r \in \mathbb{Z}$ and $|r| > 2$, we have
    \begin{align*}
        &\pr(X + r Y = 0) = \sum_k \pr(X=-r k) \pr(Y = k) 
        \\ &
        \le \sum_k \pr(X=-2k) \pr(Y=k) = \pr(X + 2Y = 0).
    \end{align*}
    We have proved that
    \begin{equation}\label{eq.inter1}
        \pr(S_n = x) \le \max_{x'} \pr(S_{2m}=x') \le \pr(X+2Y=0).
    \end{equation}
    By Lemma~\ref{lem.smalldev} 
    \begin{align}\label{eq.smalldev}
        & \pr(X=k) = \pr(X=-k) = \pr(X=0) \left(1 - \frac {k^2} {2p(1-p)n} + o(n^{-1})\right).
    \end{align}

    For odd $n$ let us now compare $\pr(X+2Y=0)$ with $\pr(X + Y'=0)$ where $Y' = \bern_1' - \bern_2' + \bern_3'$
    and $\bern_1', \bern_2', \bern_3'$ are independent Bernoulli($p$) random variables independent of $X$. Note that $X+Y' \sim T_n$.

    Note that $2 Y$ is symmetric and distributed on $\{-2, 0, 2\}$ with $\pr(2 Y = 0) = p^2 + (1-p)^2$. Therefore by (\ref{eq.smalldev})
    \begin{align*}
        & \frac {\pr(X + 2Y = 0)} {\pr(X=0)} = 2 p(1-p) \left(1 - \frac 4 {2p(1-p)n}\right) + o(n^{-1}) + p^2 + (1-p)^2
        \\ & = 1 - \frac 4 n + o(n^{-1}).
    \end{align*}
    We have that $Y'$ is distributed on $\{-1, 0, 1, 2\}$ with probabilities $p (1-p)^2$, $2p^2(1-p) + (1-p)^3$, $2p(1-p)^2 + p^3$ and $p^2(1-p)$
    respectively. Therefore by (\ref{eq.smalldev}) 
    and symmetry of $X$
    \begin{align*}
        &\pr(X + Y' = 0) = \sum_{j=-1}^{2} \pr(X=-j) \pr(Y'=j) 
        \\ &
        = \left(  (3 p (1-p)^2 + p^3) \left(1 - \frac 1 {2p(1-p)n}\right) + 2p^2 (1-p) + (1-p)^3 + \right .
        \\& 
        \left . \quad \quad + p^2 (1-p) \left(1-\frac 4 {2p(1-p)n}\right) + o(n^{-1}) \right) \pr(X=0)
        \\ &
        \\ &
        = \left(1 - \frac {3 - 2p} {2 n (1-p)} + o(n^{-1}) \right) \pr(X=0).
    \end{align*}
    Finally, combining (\ref{eq.inter1}) and the last two bounds
    \begin{align*}
        \pr(S_n = x) &\le (1 - \frac 4 n + o(n^{-1})) \pr(X=0) = (1 - \frac 4 n + o(n^{-1})) \times
        \\ &
        \quad \times \pr(T_{n} = 0) \left(1 - \frac {3 - 2p} {2n (1-p)} + o(n^{-1}) \right)^{-1} 
        \\&
        = \pr(T_{n} = 0) \left(1 - \frac {5 - 6p} {2n (1-p)} + o(n^{-1})\right).
    \end{align*}
    If $n_c =2m - 1$ and $a_n$ is not in the largest class (type), we can exchange
    $a_n$ with a constant from the largest type and apply the proof for $n_c = 2m-2$.
    Otherwise all but one $\{a_i\}$ are of the same type, and the result follows.

    Now assume $n$ is even. We only need to consider the case $n_c < n-1$. Using (\ref{eq.smalldev}) similarly as above
    \begin{align*}
        &\pr(S_n = x) \le \pr(T_{n-2} + 2 (X_1' - X_2') =0)
        \\ &
        =\pr(T_{n-2}=0) \left(1 - \frac {2 \cdot 4 p (1-p)} {2 p (1-p) n} + o(n^{-1})\right) = \pr(T_{n-2} = 0) (1 - 4n^{-1} + o(n^{-1}));
          \\ &
        \pr(T_n = 0) = 
        \pr(T_{n-2} = 0) (1-n^{-1} + o(n^{-1}))
    \end{align*}
    which implies $\pr(S_n = x) \le (1 - 3n^{-1} + o(n^{-1}))$.
    Combining the bounds for even and odd $n$ if
    \begin{align*}
        \pr(S_n = x) &\le \pr(T_n=0) \left(1 - n^{-1}\min\left(\frac {5-6p} {2 (1-p)}, 3\right) + o(n^{-1})\right) 
        \\ & 
         \le \pr(T_n=0) \left(1 - (2n)^{-1} + o(n^{-1})\right)
    \end{align*}
    as required.
\end{proofof}

\medskip

\begin{proofof}{Lemma~\ref{lem.odd_bernoulli}}
    By Lemma~\ref{lem.large_odd} for $n$ large enough $\max_{(a_i), x} \pr(\sum a_i \bern_i=x)$ can only be achieved when all but at most one cofficient $a_i$ satisfy $a_i \in \{-c,c\}$ for some $c \in \mathbb{R}^d$. Without loss of generality suppose $a_i \in \{-c, c\}$ for $i \le n-1$. 
    We have $S_n = \sum_{i=1}^n a_i X_i = c \tilde{S}_{n-1} + a_n X_n$ where  $\tilde{S}_{n-1}=\sum_{i=1}^{n-1} \tilde{a}_i X_i$ for some $\tilde{a}_1, \dots, \tilde{a}_{n-1} \in  \{-1,1\}$.

    Darroch \cite{darroch} proved that if a finite sum of independent Bernoulli random variables has mean $\mu'$
    then it is unimodal and has mode (largest atom) equal to $\lfloor \mu' \rfloor$ or $\lceil \mu' \rceil$, or both.
    Furthermore, the distribution is strictly increasing up to its mode and strictly decreasing past its mode. 
    Applying the results of \cite{darroch} to $\tilde{S}_{n-1}$ it follows that $\pr(S_n=x)$ can only be maximized with $x = c m$ where $m$ is a mode of $\tilde{S}_{n-1}$ and $a_n \in \{-c, c\}$ (while the optimal choice restricted to $a_n \not \in \{-c,c\}$ is $a_n \in \{-2c, 2c\}$). 

    Thus, without loss of generality, the maximum probability is achieved with $d=1$, $a_1, \dots, a_n \in \{-1,1\}$ and for each $k$ it suffices to consider only two values of $x$ by \cite{darroch}. 
\end{proofof}

\bigskip

Finally, we show how to get precise asymptotics for $\pr(T_n = 0)$, the method can be extended to any number of
lower order asymptotic terms.

\begin{lemma}\label{lem.taylorTn}
    Let $p \in (0, \frac 1 2)$ and $T_n$ be as in (\ref{eq.Tn}). 
    $\pr(T_n=x)$ is maximized at the unique point $x=0$.   
    \vspace{-16pt}
    \begin{align*}
    \intertext{\quad For even $n$}
        \pr(T_{n} = 0) &= \sum_{k=0}^n \pr(B_{n,p}=k)^2
        \\ &
        = \frac 1 {\sqrt{2\pi n p (1-p)}} \left(1 + \frac 1 {4n} \left(\frac 1 {2p(1-p)} - 3\right) + O(n^{-2}) \right). 
    \intertext{\quad For odd $n$}
        \pr(T_{n} = 0) &= \frac 1 {\sqrt{2\pi n p (1-p)}} \left(1 + \frac {2p^2 - 6p + 1} {8 n p (1-p)} + o(n^{-1}) \right). 
    \end{align*}
\end{lemma}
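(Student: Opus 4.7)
The plan is first to settle the mode statement and then to extract the asymptotics by Fourier inversion combined with Laplace's method.

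For uniqueness of the mode, note that for even $n=2m$ the law of $T_n$ equals $B-B'$ with $B,B'\sim \mathrm{Binom}(m,p)$ independent, which is symmetric about $0$ and log-concave (a convolution of the log-concave binomial pmf with its reflection), hence strictly unimodal with unique mode at $0$. For odd $n=2m+1$, condition on the last summand to get $\pr(T_n=k)=(1-p)\pr(T_{2m}=k)+p\,\pr(T_{2m}=k-1)$ and use symmetry of $T_{2m}$, its strict unimodality, and $p<\tfrac12$ to verify $\pr(T_n=0)>\pr(T_n=k)$ for every $k\neq 0$; the key comparison $k=1$ reduces to $(1-2p)\bigl(\pr(T_{2m}=0)-\pr(T_{2m}=1)\bigr)>0$, and $|k|\ge 2$ follows from unimodality of $T_{2m}$.

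For the asymptotic expansion, let $\phi(t)=1-p+pe^{it}$ be the Bernoulli characteristic function, so that
\[
    \pr(T_n=0)=\frac{1}{2\pi}\int_{-\pi}^{\pi}\phi(t)^{\lceil n/2\rceil}\,\phi(-t)^{\lfloor n/2\rfloor}\,dt,
\]
where $|\phi(t)|^2=1-2p(1-p)(1-\cos t)$ attains its unique maximum $1$ at $t=0$. For even $n=2m$ the integrand is real and equals $|\phi(t)|^{2m}$; the identity $\pr(T_{2m}=0)=\sum_{k}\pr(B_{m,p}=k)^2$ then follows directly from $T_{2m}\sim B-B'$. For odd $n$, after taking the real part, the integrand becomes $|\phi(t)|^{n-1}\bigl(1-p(1-\cos t)\bigr)$.

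Now apply Laplace's method. Cut the integration interval at some fixed small $\varepsilon>0$; outside $[-\varepsilon,\varepsilon]$ the integrand has modulus at most $(1-c)^{n/2}$ for some $c=c(p,\varepsilon)>0$ and is therefore negligible. On the central interval substitute $u=t\sqrt{np(1-p)}$ and Taylor expand
\[
    -\log|\phi(t)|^2 = p(1-p)t^2 + \bigl(\tfrac{1}{2}(p(1-p))^2 - \tfrac{1}{12}p(1-p)\bigr)t^4 + O(t^6),
\]
so that the main factor becomes $e^{-u^2}\bigl(1+O(u^4/n)+O(u^6/n^2)\bigr)$; for odd $n$ also expand $1-p(1-\cos t)=1-u^2/(2n(1-p))+O(1/n^2)$. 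Integrating term by term via the Gaussian moment formula $\int_{\mathbb{R}}u^{2k}e^{-u^2}\,du=(2k-1)!!\sqrt{\pi}/2^k$ produces the stated expansions.

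The main obstacle is keeping the $O(1/n)$ bookkeeping straight in the odd case, where three sources contribute at this order: the $t^4$ term in the expansion of $-\log|\phi|^2$, the extra factor $1-p(1-\cos t)$, and the discrepancy between $\lfloor n/2\rfloor$ and $n/2$ in the exponent (equivalently, a $1/(2n)$ shift absorbed into the prefactor through the change of variable). All three must be combined carefully to recover the coefficient $(2p^2-6p+1)/(8p(1-p))$ stated in the lemma; further lower-order terms can be obtained by extending the same Taylor expansions.
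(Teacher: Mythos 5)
Your approach is correct but genuinely different from the paper's. For the mode, the paper invokes Darroch's theorem on Bernoulli sums (after noting that $T_{2m}$, shifted by a constant, is such a sum) and then reduces odd $n$ to even $n$ via the unimodality/alternation inequality (\ref{eq.alternation}); your argument via symmetry plus strict log-concavity of $B-B'$ together with conditioning on the last summand is an equally valid route, and your key reduction $\pr(T_{2m+1}=0)-\pr(T_{2m+1}=1)=(1-2p)\bigl(\pr(T_{2m}=0)-\pr(T_{2m}=1)\bigr)$ is exactly the computation one needs. For the asymptotics, the paper instead reduces $\pr(T_{2m}=0)$ to a central trinomial-type coefficient $[x^m](x^2+bx+c)^m$ and quotes Wagner's Legendre-polynomial expansion (\ref{eq.wagner}), then obtains the odd case through a recurrence combined with the separately proven small-deviation Lemma~\ref{lem.smalldev}. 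Your Fourier-inversion/Laplace route is more self-contained (it does not outsource the hard estimate to Wagner) and, if carried through, would actually upgrade the paper's $o(n^{-1})$ error for odd $n$ to $O(n^{-2})$, because it does not pass through the cruder ratio estimate of Lemma~\ref{lem.smalldev}. I checked that your plan does reproduce the stated constants: with $q=p(1-p)$, the quartic correction contributes $\tfrac{1}{4n}\bigl(\tfrac{1}{2q}-3\bigr)$, and in the odd case the two extra quadratic corrections $e^{\frac12(-\log|\phi|^2)}\approx 1+\tfrac q2 t^2$ and $1-p(1-\cos t)\approx 1-\tfrac p2 t^2$ multiply to $1-\tfrac{p^2}{2}t^2+O(t^4)$, contributing an additional $-\tfrac{p}{2n(1-p)}$, and $\tfrac{1}{4n}\bigl(\tfrac{1}{2q}-3\bigr)-\tfrac{p}{2n(1-p)}=\tfrac{2p^2-6p+1}{8nq}$ as required. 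Two small blemishes worth fixing if you write this out in full: your substitution $u=t\sqrt{np(1-p)}$ gives $e^{-u^2/2}$, not $e^{-u^2}$, so either rescale $u$ or use the standard-normal moment $\int u^{2k}e^{-u^2/2}\,du=(2k-1)!!\sqrt{2\pi}$; and the claim that $|k|\ge2$ ``follows from unimodality'' needs the extra line that the two shifted indices $k$ and $k-1$ both stay on the same side of and strictly farther from $0$ than $0$ and $-1$ respectively, so that both summands in the conditional decomposition are dominated.
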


\begin{proof}
    For even $n$ the maximum atom of $T_n$ is 0 by symmetry and unimodality of $T_n$ \cite{darroch}.
    For odd $n$ since $T_{n-1}$ is unimodal, it is easy to see that $T_{n}$ is also unimodal and 
    \begin{equation}\label{eq.alternation}
        \pr(T_{n} = 0) > \pr(T_{n}=1) > \pr(T_{n}=-1) > \pr(T_{n}=2) > \dots,
    \end{equation}
    in particular, for odd $n$ the maximum atom is still zero. 

    Wagner \cite{wagner} presented a simple argument using analytic combinatorics and the generating function for the values of Legendre polynomials, that for $b > 0$ and $c>0$
    \begin{equation}\label{eq.wagner}
        [x^n] (x^2+bx+c)^n = \frac {(b+2\sqrt{c})^{n+1/2}} {2 c^{1/4} \sqrt{\pi n}} \left(1+\frac {b-4 \sqrt c} {16 n \sqrt c} + O(n^{-2})\right).
    \end{equation}
    We have 
    \begin{align*}
        \pr(T_{2n} = 0) &= (1-p)^{2n} \sum \binom n k^2 \frac {p^{2k}} {(1-p)^{2k}}
        = (1-p)^{2n} [x^n] (1 + d x)^n (x+1)^n 
        \\ &
        = 
        p^{2n} [x^n](x^2 + \frac {d+1} d x + d^{-1})^{n}
    \end{align*}
    where $d = \frac {p^2} {(1-p)^2}$.
    Let
    \begin{align*}
        & a=1, \quad b=\frac{d+1} d = \frac {2p^2 - 2p + 1} {p^2} \quad\mbox{and}\quad c=d^{-1} = \frac {(1-p)^2} {p^2}.
    \end{align*}
    Then
    \begin{align*}
        & b + 2 \sqrt c = \frac {2p^2 - 2p + 1} {p^2} + \frac {2(1-p)} p = p^{-2}
        \\ & \frac {b - 4 \sqrt c} {2\sqrt c} = \frac {b+2\sqrt c} {2 \sqrt c} - 3 = \frac 1 {2 p (1-p)} - 3;
        \\ & \frac {(b+2 \sqrt c)^{n+1/2}} {2 c^{1/4}} = \frac {p^{-2n - 1}} {2 p^{-1/2} (1-p)^{1/2}} = 2^{-1} p^{-2n} p^{-1/2} (1-p)^{-1/2}
    \end{align*}    
    and by  (\ref{eq.wagner})
    \[
        \pr(T_{2n} = 0) = \frac  1 {2 \sqrt{\pi np (1-p)}} \left(1 + \frac 1 {8n} \left(\frac 1 {2p(1-p)} - 3\right) + O(n^{-2}) \right). 
    \] 
    For odd $n$ by Lemma~\ref{lem.smalldev} 
    \begin{align*}
        \pr(T_n = 0) &= p \pr(T_{n-1} = -1) + (1-p) \pr(T_{n-1} = 0)
        \\ &
        =\pr(T_{n-1} = 0) \left(1 - \frac {p}{2 p (1-p) n}  + o(n^{-1})\right)
        \\&
        =\pr(T_{n-1}=0) \left(1 - \frac 1 { 2(1-p)n}  + o(n^{-1})\right) .
    \end{align*}
    Using the already proved part for even $n$
    \begin{align*}
        \frac {\pr(T_{n-1} = 0)} {\sqrt{2\pi n p (1-p)}} = \left( 1 + \frac 1 {4n} \left(\frac 1 {2p(1-p)} - 3\right) + O(n^{-2}) \right) \left(1 + \frac 1 {2n} + O(n^{-2})\right).
    \end{align*}
    Combining the last two estimates 
    \begin{align*}
        & \frac {\pr(T_n = 0)} {\sqrt{2\pi n p (1-p)}  } =
        1 + n^{-1}\left(\frac 1 {8p(1-p)} - \frac 3 4 - \frac 1 {2(1-p)} + \frac 1 2 \right) + o(n^{-1})
        \\ &
        = 1 + \frac {2p^2 - 6p + 1} {8 n p (1-p)} + o(n^{-1}).
    \end{align*}
\end{proof}

\section{Open problems and concluding remarks}

We believe that at least for lattice-valued random vectors the following more general result is true.
\begin{conjecture}\label{conj.1}
Let $X_1,\ldots, X_n$ be iid random vectors in $\Z^d$. Then there exists a choice of weights $w_i\in \{-1,1\}$ such that for all non-zero $a_i\in \R$ and all $x\in \R^d$ we have
$$\pr(a_1X_1+\ldots+a_nX_n = x)\leq \max_{k\in \Z^d}\pr(w_1X_1+\ldots+w_nX_n=k).$$
\end{conjecture}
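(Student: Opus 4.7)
The even case of Conjecture~\ref{conj.1} follows immediately from Corollary~1\ref{cor2} with $w_i=(-1)^{i+1}$ and $k=0$; the content is the odd case. My plan is to generalize the argument of Lemma~\ref{lem.large_odd}---which handles iid Bernoulli random variables---to arbitrary iid lattice-valued random vectors.

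Standard preliminary reductions first. Both sides of the claimed inequality are invariant under simultaneous rescaling of the coefficients, so I may take $a_i \in \Q\setminus\{0\}$ and then, after clearing denominators, $a_i \in \Z\setminus\{0\}$; irrational $a_i$ are handled by density/continuity. With integer coefficients in hand, I apply the balancing machinery of Lemma~\ref{l1} iteratively to the vectors $a_i X_i$, viewing each as a random variable with its own type $\{a_i X_i,-a_iX_i\}$ and using the AGM step~(\ref{eq.agm}) on splits that mix types. For odd $n$, exactly as in the proof of Lemma~\ref{lem.large_odd}, this process generically terminates with two surviving types, producing a ``two-type'' bound
\[
    \pr(\sum_i a_i X_i = x) \le \pr(c_1 \tilde S_{k_1} + c_2 \tilde S_{k_2}' = 0),
\]
where $\tilde S_m$ is a balanced $\pm 1$ sum of $m$ iid copies of $X$, the residual scalars $c_1, c_2 \in \R^d\setminus\{0\}$, and the split $(k_1, k_2)$ is determined by the initial coefficients. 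A cycle argument as in Lemma~\ref{l1} should then push the split to an extreme of its feasible range, matching the optimal proportion of $+1$s and $-1$s in the target sign pattern.

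The main obstacle I anticipate is the final step: converting this two-type bound into a genuine $n$-term sign-pattern bound $\max_{k^*} \pr(\sum_i w_i X_i = k^*)$ with $w_i \in \{-1, 1\}$. In Lemma~\ref{lem.large_odd} this is accomplished through the small-deviation asymptotics of Lemma~\ref{lem.smalldev}, which lean heavily on explicit binomial coefficient identities not available in general. To handle an arbitrary $X$ I would attempt to substitute a local limit theorem for $\tilde S_m$ in place of Lemma~\ref{lem.smalldev}, combined with the rearrangement inequality of Lemma~\ref{sym} and the peakedness comparison of Corollary~\ref{cor3}, both already central to the proof of Theorem~\ref{thm2}. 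An alternative closer in spirit to Singhal~\cite{Singhal} is to work entirely through characteristic functions: express both sides by Fourier inversion on the relevant lattice in $\Z^d$ and compare the integrands directly, bypassing the rearrangement step at the cost of delicate analytic estimates that must be uniform in $n$ rather than merely asymptotic. I expect that either route will require genuinely new ingredients beyond those developed in the present paper.
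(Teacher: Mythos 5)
This statement appears in Section~5 of the paper under ``Open problems and concluding remarks'' and is stated explicitly as a \emph{conjecture}: the paper provides no proof, and the authors remark only that Corollary~1\ref{cor2} disposes of the even-$n$ case, leaving odd $n$ open. There is therefore no proof in the paper to compare your attempt against, and your proposal—which you yourself describe as requiring ``genuinely new ingredients beyond those developed in the present paper''---does not resolve it either.

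A few concrete issues with the sketch as written. First, the reduction from arbitrary real coefficients to rational (and then integer) ones is not a routine density or continuity argument: the map $(a_1,\dots,a_n)\mapsto\pr(\sum a_iX_i=x)$ is highly discontinuous, since perturbing the $a_i$ can merge or split atoms. What one actually needs is a monotonicity statement (perturbing the $a_i$ to a suitable nearby rational tuple can only merge atoms and thus increase the supremum), which requires a careful algebraic argument involving the $\Q$-linear relations among the $a_i$, not mere density. Second, Lemma~\ref{l1} applies only to even $n$; for odd $n$ the paper's route through Lemma~\ref{lem.large_odd} relies on dropping one summand, applying the balancing lemma to the remaining $n-1$, and then comparing two explicit ``two-type'' distributions using the precise second-order asymptotics of Lemma~\ref{lem.smalldev}. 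Those asymptotics depend on closed-form binomial identities specific to Bernoulli variables (and on Darroch's unimodality results), and---as the paper's numerical experiments and the discussion of Singhal's work show---the optimal sign pattern for odd $n$ depends on the distribution in a complicated, non-continuous way. So the decisive step you flag as ``the main obstacle'' is indeed where the whole difficulty of the conjecture lives; the paper offers no tool for it, and your LLT/characteristic-function alternatives are at this point programmatic rather than a proof. In short: you correctly identify the shape of a possible attack and its chief obstruction, but this is an open problem and your proposal does not close it.
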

Of course, in view of Corollary~1\ref{cor2} one would have to only prove it for odd $n$.

The second conjecture concerns Theorem \ref{thm2}.
\begin{conjecture}
Let $X_1,\ldots, X_n$ be iid random vectors in $\R^d$ such that 
$$\sup_{x\in \R^d}\pr(X_i=x)\leq \alpha.$$
Then there exists a choice of weights $w_i\in \{-1,1\}$ such that for all non-zero $a_i\in \R$ and all $x\in \R^d$ we have
$$\pr(X_1+\ldots+X_n = x)\leq \max_{k\in \Z}\pr(w_1U^{\alpha}_1+\ldots+w_nU^{\alpha}_n=k).$$
\end{conjecture}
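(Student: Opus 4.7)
The plan is to split by parity of $n$. When $n$ is even the conjecture is a direct consequence of Theorem~\ref{thm2} with $w_i=(-1)^{i+1}$: the sum $\sum_{i=1}^n (-1)^{i+1} U_i^\alpha$ is a convolution of $n/2$ iid symmetric unimodal integer variables $U^\alpha_{2i-1}-U^\alpha_{2i}$, hence itself symmetric and unimodal with mode at $0$, so $\max_k \pr(\sum w_i U^\alpha_i=k)=\pr(\sum w_i U^\alpha_i=0)$ and the inequality coincides with Theorem~\ref{thm2}. So the substance lies in the odd case $n=2m+1$.

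For odd $n$ I would first recycle the reductions used in the proof of Theorem~\ref{thm2}. Since $X_i$ are iid with concentration at most $\alpha$, each $a_iX_i$ also has concentration at most $\alpha$, so Lemma~\ref{reduction} lets us assume all $a_iX_i$ are integer-valued. The Krein--Milman / extreme-point argument with Lemma~\ref{extreme}, together with the convexity argument on $g(t)=\pr(\sum_{j\ne i} a_jX_j + t = x)$, then reduces us to the case where each $a_iX_i$ has distribution $\mu_{\alpha,A_i,y_i}$ for some finite $A_i\subset\mathbb{Z}$ and $y_i\in\mathbb{Z}\setminus A_i$. Neither step uses the parity of $n$.

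The main new work is an odd analogue of Lemma~\ref{l1}. I would apply Lemma~\ref{l1} to an even-length subsum (say $\sum_{i\le n-1} a_iX_i$) to collapse it into an alternating sum of iid copies of some $X_j$, and then reintroduce the leftover term $a_nX_n$ through the rearrangement inequality of Lemma~\ref{sym} followed by the peakedness comparison in Lemma~\ref{bb} and Corollary~\ref{cor3}, exactly as in the second half of the proof of Theorem~\ref{thm2}. This should yield an upper bound of the shape $\pr(\sum w_iU_i^\alpha = k^\star)$ for some integer $k^\star$ and some $\pm 1$ pattern $w_i$ parametrised by the split $k\in\{0,\dots,n\}$ of minus signs, in the spirit of Lemma~\ref{lem.odd_bernoulli}.

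The hard part will be exactly the one the paper flags as difficult: choosing the $\pm 1$ pattern. Lemma~\ref{lem.odd_bernoulli} and the discussion after it show that already for Bernoulli $X_i$ the optimal split $k^\star$ depends on $\alpha$ in a non-continuous way, and a priori it could depend on the particular extreme distribution $\mu_{\alpha,A,y}$ as well. The conjecture demands a \emph{single} $w$ uniform in the $a_i$ and $x$, so one needs either a robust exchange argument that survives the loss of the balancing lemma for odd $n$, or a characteristic-function computation in the spirit of Singhal~\cite{Singhal} that identifies the ``universal'' worst-case split for each $(n,\alpha)$. I expect this second route --- reducing the combinatorial optimisation over $w_i$ to an analytic optimisation via Fourier analysis and then matching the resulting bound to the lattice structure of $\mu_{\alpha,A,y}$ --- to be where the real difficulty sits.
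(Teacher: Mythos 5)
This statement is Conjecture~2 of the paper, and the paper \emph{does not prove it}: it is posed as an open problem. The paper only records that it holds in the Bernoulli case ($\alpha\ge\frac12$) for even $n$ and for sufficiently large odd $n$ (via Theorem~\ref{thm2}, Lemma~\ref{lem.odd_bernoulli}, and Lemma~\ref{lem.large_odd}), and that Singhal~\cite{Singhal} independently proved the full Bernoulli case for all $n$; the case of general $\alpha$ and general odd $n$ is left open. So there is no ``paper's own proof'' to compare against.

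Your proposal is honest about this and correctly places the difficulty. The even-$n$ disposal via Theorem~\ref{thm2} is right: $\sum(-1)^{i+1}U_i^\alpha$ is a convolution of iid symmetric unimodal summands $U_{2i-1}^\alpha - U_{2i}^\alpha$, hence symmetric unimodal with mode at $0$, so $\max_k\pr(\sum w_iU_i^\alpha=k)=\pr(\sum(-1)^{i+1}U_i^\alpha=0)$ and the bound coincides with Theorem~\ref{thm2}. The odd-$n$ scaffolding you sketch (Lemma~\ref{reduction}, Krein--Milman with Lemma~\ref{extreme}, Lemma~\ref{l1} on an even subsum, Lemma~\ref{sym} and Corollary~\ref{cor3}) is a plausible set of tools and is the natural extension of the paper's Theorem~\ref{thm2} machinery, but it stops well short of a proof. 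Two concrete gaps remain. First, after the extreme-point reduction and applying Lemma~\ref{l1} to the first $n-1$ terms, you are left with $n-1$ iid copies of one extreme distribution $\mu_{\alpha,A,y}$ plus a residual term that may come from a \emph{different} extreme distribution; the rearrangement-plus-peakedness step of Theorem~\ref{thm2} compares against iid $U^\alpha$, and it is not automatic how to absorb the mismatched residual into the form $\sum w_iU_i^\alpha$. Second, and as you flag, even once the comparison is made, choosing a \emph{single} $\pm 1$ pattern $w$ that is uniformly optimal is exactly where the problem is hard: the discussion after Lemma~\ref{lem.odd_bernoulli} shows the optimal split depends discontinuously on $\alpha$ even in the Bernoulli case, so no exchange argument of the Lemma~\ref{l1} type will pin it down for free. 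Your proposal is a reasonable roadmap, not a proof; its status matches that of the conjecture.
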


For the simplest case of iid Bernoulli random variables $X_i$ (i.e. $\alpha \ge \frac 1 2$)
we saw that this is true for even $n$ and large odd $n$, and Singhal \cite{Singhal} independently proved it for all~$n$.
What is the optimal number $l=l(n,p)$ of +1s? 
Although Singhal obtains excellent results for fixed $p$ and large $n$, 
the complete answer still seems not obvious. For example, for a given $n$, can $l$ take any value in $\{0, \dots, n\}$ depending on $p$? 


Tao and Vu proved in \cite{tv} that for a collection of non-zero $a_i\in \Z^d$ and independent random variables $X_i$ such that $\pr(X_i=\pm 1)=\frac{1}{2}$ if the probability $\pr(a_1X_1+\cdots+a_nX_n=x)$ is large, then most of the $a_i$ can be covered
by a small number of generalized arithmetic progressions. In other words, the collection of weights $a_i$ has strong additive structure. 
Their work lead important progress in the investigation of random matrices.
\begin{question}\label{q.2}
Can inverse statements of Corollary~1\ref{cor1} be obtained if we additionally assume that $a_i\in \Z^d$?
\end{question}
In the case when the variances of $X_i$ are bounded, we believe that the inverse statements should be 
analogous to the corresponding ones in \cite{tv}. The precise statement
might need to be formulated differently in the case when $X_i$s have
a heavy-tailed distribution.

\medskip
\textbf{Acknowledgements.} We would like to thank the reviewers for their useful remarks. We are also grateful to Matas \v Sileikis for a careful reading of our manuscript and numerous corrections that he provided.


\begin{thebibliography}{12}
    \bibitem{Birnbaum}
Z.~W. {Birnbaum},
\newblock {On random variables with comparable peakedness,}
\newblock {\em {Ann. Math. Stat.}} \textbf{19} (1948), 76--81.

\bibitem{darroch}
J.~N. {Darroch},
\newblock {On the distribution of the number of successes in independent
  trials},
\newblock {\em {Annals of Mathematical Statistics}} \textbf{35} (1964), 1317--1321.

\bibitem{davis1995elementary}
B. Davis and D. McDonald,
\newblock An elementary proof of the local central limit theorem,
        \newblock {\em Journal of Theoretical Probability} \textbf{8} (1995), 693--702.


\bibitem{ELO}
P.~Erd\H{o}s,
\newblock On a lemma of {L}ittlewood and {O}fford,
        \newblock {\em Bull. Amer. Math. Soc.} \textbf{51} (1945), 898--902.

    \bibitem{em}
        P. Erd\H{o}s and L. Moser,
\newblock Elementary problems and solutions: Solutions: E736,
        \newblock {\em The American Mathematical Monthly} \textbf{54} (1947), 229--230.



\bibitem{ess}
C.~G. {Esseen},
\newblock {On the Kolmogorov-Rogozin inequality for the concentration
  function},
        \newblock {\em {Z. Wahrscheinlichkeitstheor. Verw. Geb.}} \textbf{5} (1966), 210--216.

\bibitem{FKS}
J. Fox, M. Kwan, and L. Sauermann,
        \newblock Combinatorial anti-concentration inequalities, with applications, \emph{Math. Proc. Cambridge Philos. Soc.}, to appear; \emph{arXiv preprint} arXiv:1905.12142 (2019).

    

\bibitem{halasz}
G.~Hal{\'a}sz,
\newblock Estimates for the concentration function of combinatorial number
  theory and probability,
        \newblock {\em Periodica Mathematica Hungarica} \textbf{8} (1977), 197--211.

\bibitem{HLP}
G.~H. {Hardy}, J.~E. {Littlewood}, and George {P\'olya},
\newblock {\em {Inequalities}}, second edition, Cambridge University Press,
Cambridge, 1952.

\bibitem{Gabriel}
B.~M. {Gabriel},
\newblock {The rearrangement of positive Fourier coefficients},
\newblock {\em {Proc. Lond. Math. Soc.}} \textbf{33} (1931), 32--51.

\bibitem{g}
N.~G. Gamkrelidze,
\newblock Estimation of the maximum probability for sums of lattice random
  variables,
\newblock {\em Theory of Probability \& Its Applications} \textbf{18} (1974), 799--803.

\bibitem{JKarxiv} 
T. {Ju\v skevi\v cius} and V. Kurauskas, On Littlewood--Offord problem for arbitrary distributions (full version),
\emph{arXiv preprint} arXiv:1912.08770.

\bibitem{JS}
T. {Ju\v skevi\v cius} and G. {\v Semetulskis},
\newblock {Optimal Littlewood--Offord inequalities in groups,}
\newblock {\em {Combinatorica}} \textbf{39} (2019) 911--921.

\bibitem{KLO}
D.~J. Kleitman,
\newblock On a lemma of Littlewood and Offord on the distributions of
  linear combinations of vectors,
        \newblock {\em Advances in Mathematics} \textbf{5} (1970), 155--157.
    
\bibitem{KM}
M. {Krein} and D. {Milman},
\newblock{On extreme points of regular convex sets},
\newblock{ \em {Studia Mathematica}} \textbf{9} (1940) 133--138.

\bibitem{LR}
I.~Leader and A.~J. Radcliffe,
\newblock Littlewood--{O}fford inequalities for random variables,
        \newblock {\em SIAM J. Discrete Math.} \textbf{7} (1994), 90--101.


\bibitem{cmcd98} C. McDiarmid, Concentration, in \textit{Probabilistic Methods for Algorithmic Discrete Mathematics}, M. Habib, C. McDiarmid, J. Ramirez-Alfonsin and B. Reed Eds., Springer, New York (1998) 195--248.

  
\bibitem{anicolas}
A.~Nicolas, Probability two symmetric random walks are at the same point after
  $n$ steps, \emph{Mathematics Stack Exchange}, \url{https://math.stackexchange.com/q/739658} (version: 2014-04-04).
    

\bibitem{rog}
B.~A. Rogozin,
\newblock Inequalities for concentration functions of convolutions of
  arithmetic distributions and distributions with bounded densities,
        \newblock {\em Theory of Probability \& Its Applications} \textbf{32} (1988), 325--329.

\bibitem{distr}
P.~{Ruckdeschel}, M.~{Kohl}, T.~{Stabla}, and F.~{Camphausen},
\newblock {S4 Classes for Distributions},
\newblock {\em {R News}}, \textbf{6} (2006), 2--6.


\bibitem{Singhal} M. Singhal, Erd\H{o}s--Littlewood--Offord problem with arbitrary probabilities, \emph{arXiv preprint} arXiv:1912.02886 (2019).

\bibitem{Sperner} E. Sperner, Ein Satz über Untermengen einer endlichen Menge. \emph{Math. Z.} \textbf{27} (1928), 544--548. 

\bibitem{Stanley}
R.~P. Stanley,
\newblock Weyl groups, the hard Lefschetz theorem, and the Sperner property,
\newblock {\em {SIAM} Journal on Algebraic Discrete Methods} \textbf{1} (1980), 168--184.


\bibitem{tv}
T. {Tao} and Van~H. {Vu},
\newblock {Inverse Littlewood--Offord theorems and the condition number of
  random discrete matrices,}
\newblock {\em {Ann. Math.}} \textbf{169} (2009) 595--632.

\bibitem{Tiep}
Pham H. {Tiep} and Van~H. {Vu},
\newblock {Non-abelian Littlewood--{O}fford inequalities,}
\newblock {\em {Advances in Mathematics}} \textbf{302} (2016) 1233--1250.

\bibitem{Ushakov}
N.~G. {Ushakov},
\newblock {Upper estimates of maximum probability for sums of independent
  random vectors},
        \newblock {\em {Theory of Probability \& Its Applications}} \textbf{30} (1986), 38--49.

\bibitem{wagner}
S.~{Wagner},
\newblock Asymptotics of generalised trinomial coefficients, \emph{arXiv preprint} arXiv:1205.5402 (2012). 

\bibitem{wolfram} \emph{Wolfram\textbar Alpha}, \url{https://www.wolframalpha.com}, last visited on 2019-09-20.



\end{thebibliography}
\end{document}